\newcommand{\rar}{\longrightarrow}
\newcommand{\al}{\alpha}
\newcommand{\be}{\beta}
\newcommand{\ga}{\gamma}
\newcommand{\de}{\delta}
\newcommand{\eps}{\epsilon}
\newcommand{\sg}{\sigma}
\newcommand{\bN}{{\mathbb N}}
\newcommand{\bR}{{\mathbb R}}
\newcommand{\cA}{{\mathcal A}}
\newcommand{\cB}{{\mathcal B}}
\newcommand{\cC}{{\mathcal C}}
\newcommand{\cF}{{\mathcal F}}
\newcommand{\cJ}{{\mathcal J}}
\newcommand{\cP}{{\mathcal P}}
\newcommand{\cS}{{\mathcal S}}
\newcommand{\sbr}{\smallbreak}
\def\diam{\mathrm{diam}}
\newcommand{\indlim}[1]{\lim\limits_{\underset{N}{\longrightarrow}}}
\newtheorem{thm}{Theorem}
\newtheorem{cor}[thm]{Corollary}
\newtheorem{lem}[thm]{Lemma}
\newtheorem{conjecture}{Conjecture}
\newtheorem{prop}[thm]{Proposition}
\newtheorem*{t:mainth0}{Theorem}
\theoremstyle{remark}
\newtheorem{rem}{Remark}
\newtheorem{example}{Example}
\newcommand{\bbe}{\begin{equation}}
\newcommand{\ee}{\end{equation}}
\title[From Apollonian packings to
homogeneous sets]{From Apollonian packings to homogeneous sets}
\author{Sergei Merenkov}
\address{Department of Mathematics\\
University of Illinois at Urbana-Champaign\\ 1409 W. Green str.\\ Urbana, IL
61801\\USA\\ 
and Department of Mathematics\\
The City College of New York\\
Convent Ave. at 138th Str.\\
New York, NY 10031\\USA
} \email{merenkov@illinois.edu\\ smerenkov@ccny.cuny.edu}
\thanks{The first author was supported by NSF grant DMS-1001144 and
the second author was supported by NSF grant DMS-0901230.
}
\author{Maria Sabitova}
\address{Department of Mathematics\\
CUNY Queens College\\ 65-30 Kissena Blvd.\\ Flushing, NY
11367\\USA} \email{Maria.Sabitova@qc.cuny.edu}
\date{\today}
\begin{document}

\begin{abstract} 
We extend fundamental results concerning Apollonian packings, which constitute a major object of study in number theory, to certain homogeneous sets that arise naturally in complex dynamics and geometric group theory. 
In particular, we give an analogue of D.~W.~Boyd's theorem (relating the curvature distribution function of an Apollonian packing to its exponent and the Hausdorff dimension of the residual set) for Sierpi\'nski carpets that are Julia sets of hyperbolic rational maps.
%
%



\end{abstract}

\maketitle




\section{Introduction}\label{S:Intro} 
%

One of the most studied objects in number theory which continues to intrigue mathematicians since ancient times is the theory of Apollonian packings. An {\em Apollonian circle packing} can be formed as follows. Consider three mutually exterior-wise tangent circles $C_1$, $C_2$, and $C_3$ in the plane, i.e., each circle touches each of the other two at exactly one point and the open discs enclosed by $C_1,C_2$, and $C_3$ do not intersect pairwise (e.g., see the circles labeled by $18$, $23$, and $27$ in Figure~\ref{pic:Ap} below). A theorem of Apollonius  says that there exist exactly two circles that are tangent to all the three circles $C_1$, $C_2$, and $C_3$. In our example in Figure~\ref{pic:Ap} it would be the big outer circle, denoted by  $C_0$, and the small one, say $C_4$, inscribed inside the curvilinear triangle formed by arcs of $C_1$, $C_2$, and $C_3$. Applying the Apollonius theorem to any three mutually tangent circles among $C_0,C_1,\ldots, C_4$ (for the circle $C_0$ one should take the open disc it bounds as its exterior), one gets new circles inside the disc bounded by $C_0$. Continuing this process indefinitely, one obtains an Apollonian circle packing. 

\sbr
%

\begin{figure}
[htbp] 
\includegraphics[height=40mm]{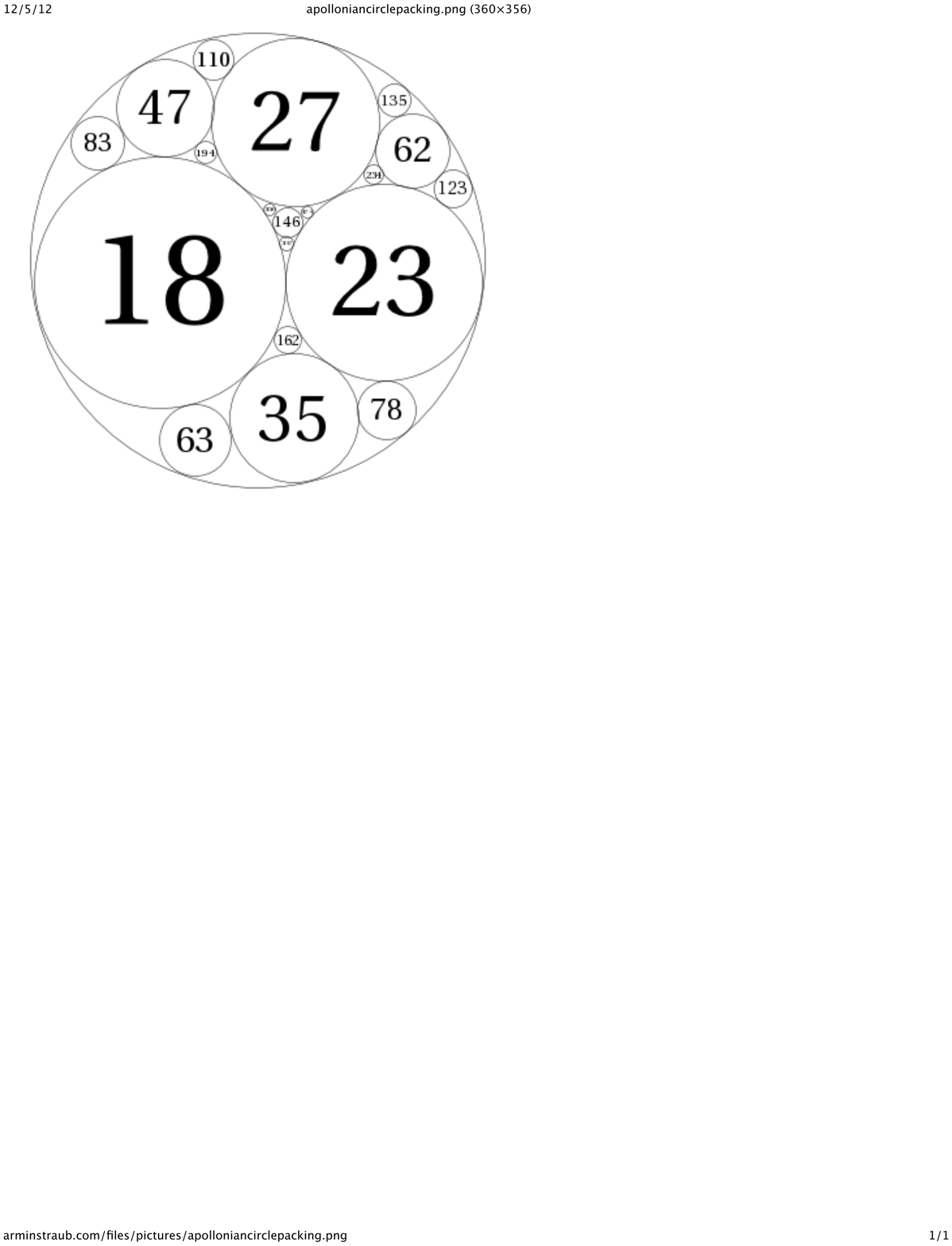}\caption{An Apollonian circle packing with integer curvatures.}
\label{pic:Ap}
\end{figure}

Apollonian circle packings are of interest in number theory in particular due to the observation that 
 if 
four circles $C_0, C_1,C_2,C_3$
 have integer curvatures, then all the circles in the packing also have integer curvatures  
 (in Figure~\ref{pic:Ap} the numbers inside circles are their curvatures and  $C_0$ has curvature $10$). 
There has been extensive research done on number theoretic aspects of Apollonian circle packings as well as on related  objects (see \cite{s2} for a summary of recent advances; also \cite{bf}, \cite{g1}, \cite{g2}, \cite{g3}, \cite{g4},  \cite{k}, 
\cite{o}, \cite{s1}, and others).

\sbr

Consider an Apollonian circle packing $\cP$ whose circles are denoted by $C_0,C_1,\ldots ,$ and for each $k\in\{0,1,\ldots\}$ let $r_k$ denote the radius of $C_k$. With $\cP$ one associates {\em the exponent} $E=E(\cP)$ given by
$$
E=\inf\{t\in\bR\,\,\vert\,\,\sum_kr_k^t<\infty\}.
$$
Note that $E\ne\pm\infty$. Indeed, clearly $t$ is non-negative,  and the series $\sum_kr_k^2$ is convergent since $\pi\sum_kr_k^2$ is bounded by the area enclosed by the outer circle $C_0$. 
One also defines the following {\em curvature distribution function} 
$$
N(x)=\#\{k\,\,\vert\,\, r_k^{-1}\leq x\},\quad x\in\bR.
$$

Let $\cS$ be the complement of the union of all the open discs enclosed by circles $C_k$, $k\in\{1,2,\ldots \}$, in the closed disc enclosed by $C_0$. 
The set $\cS$ is called {\em the  residual set of}\, $\cP$. It is a set of Lebesgue measure zero and is a {\em fractal} in the sense that it behaves similarly on all scales. A natural notion of dimension that can be associated to any metric space, in particular the residual set of an Apollonian circle packing endowed with the restriction of the Euclidean metric, is  the {Hausdorff dimension}. 
It is known that the Hausdorff dimension $\dim_H\cS$ of the residual set $\cS$ of an Apollonian circle packing is approximately $1.305688$ (see,  e.g., \cite{m}, p. 692). The following result is fundamental in the study of Apollonian circle packings and provides the main motivation for the present paper. 
\begin{thm}[D. W. Boyd, \cite{b1}, \cite{b2}]\label{th:2}
If $\cP$ is an Apollonian circle packing, then
$$
\lim_{x\to\infty}\frac{\log N(x)}{\log x}=E=\dim_H \cS,
$$
where $N$ is the curvature distribution function, $E$ is the exponent of $\cP$, and $\dim_H \cS$ is the Hausdorff dimension of the residual set $\cS$ of $\cP$.
\end{thm}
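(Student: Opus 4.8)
The plan is to separate the statement into the two equalities it asserts and to recover the existence of the limit at the very end by squeezing. Write $a_k=r_k^{-1}$ for the curvatures, so that $N(x)=\#\{k:a_k\le x\}$ and $E$ is precisely the exponent of convergence of the sequence $(a_k)$. First I would record the purely elementary identity $\limsup_{x\to\infty}\frac{\log N(x)}{\log x}=E$: if $t>E$ then convergence of $\sum_k a_k^{-t}$ forces $N(x)\le\big(\sum_k a_k^{-t}\big)\,x^{t}$ via the crude bound $\sum_{a_k\le x}a_k^{-t}\ge x^{-t}N(x)$, whereas if $N(x)\le x^{s}$ held eventually then an Abel summation would make $\sum_k a_k^{-t}$ converge for every $t>s$. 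What this elementary argument does \emph{not} give is that the $\liminf$ agrees with the $\limsup$; the existence of the limit is a genuinely geometric fact, and I would recover it only after the Hausdorff dimension has been pinned down.

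For the upper bound $\dim_H\cS\le E$ I would cover $\cS$ by the curvilinear triangles of the packing. At generation $n$ the complement inside $C_0$ of all discs placed so far is a union of closed triangles $T$, each split into three children by the circle Apollonius inscribes in it, and $\cS\subseteq\bigcup_T T$. The geometric input is the bounded geometry of the construction: $\diam T$ is comparable, with universal constants, to the radius $r_T$ of the circle inscribed in $T$, and $\max_T\diam T\to0$ as $n\to\infty$. Since each $r_T$ is the radius of a distinct circle of the packing, for $t>E$ one gets, uniformly in $n$,
\[
\sum_{T\text{ at generation }n}(\diam T)^{t}\ \lesssim\ \sum_{T}r_T^{t}\ \le\ \sum_k r_k^{t}\ <\ \infty ,
\]
so the generation-$n$ triangles are admissible $\delta$-covers with $\delta\to0$ of bounded $t$-cost; hence $\mathcal{H}^{t}(\cS)<\infty$ and $\dim_H\cS\le t$, and letting $t\downarrow E$ gives $\dim_H\cS\le E$.

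For the lower bound $\dim_H\cS\ge E$, which is the heart of the matter, I would build a mass distribution $\mu$ on $\cS$ by splitting, at each generation, the mass of a triangle $T$ among its three children in proportion to a fixed power of their diameters, and then verify the Frostman estimate $\mu(B(x,\rho))\lesssim\rho^{t}$ for every $t<E$; the mass distribution principle then yields $\mathcal{H}^{t}(\cS)>0$ and $\dim_H\cS\ge t$, whence $\dim_H\cS\ge E$. The obstruction—and the main difficulty of the whole proof—is that the packing is only \emph{quasi}-self-similar: its circles are images of finitely many base circles under compositions of the Möbius generators of the Apollonian group, so the self-similar bookkeeping must be carried out with explicit control of the conformal distortion of these maps, keeping the comparability constants (in both the triangle/inscribed-circle estimates and the Frostman bound) uniform across all generations. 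This is exactly where the homogeneity of the set is used.

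It remains to see that the limit exists; since $\limsup=E$ and $\dim_H\cS=E$ by the two bounds above, I only need $\liminf_{x\to\infty}\frac{\log N(x)}{\log x}\ge E$. Let $N_\delta(\cS)$ be the least number of balls of radius $\delta$ covering $\cS$. On one hand, once all $N(1/\delta)$ circles of radius $\ge\delta$ are in place the remaining gaps have size $\lesssim\delta$, so $N_\delta(\cS)\lesssim N(1/\delta)$. On the other hand, the Frostman measure $\mu$ from the lower-bound step satisfies $\mu(B(x,\delta))\lesssim\delta^{t}$ for $t<E$, so $0<\mu(\cS)\lesssim N_\delta(\cS)\,\delta^{t}$ and thus $N_\delta(\cS)\gtrsim\delta^{-t}$. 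Combining the two gives $N(1/\delta)\gtrsim\delta^{-t}$, i.e.\ $N(x)\gtrsim x^{t}$ for all large $x$, so $\liminf_{x\to\infty}\frac{\log N(x)}{\log x}\ge t$; letting $t\uparrow E$ shows the $\liminf$ equals $E$. Hence the limit exists and
\[
\lim_{x\to\infty}\frac{\log N(x)}{\log x}=E=\dim_H\cS .
\]
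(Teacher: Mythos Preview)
The paper does not prove Theorem~\ref{th:2}; it is quoted, with references to Boyd~\cite{b1,b2}, as motivation, and no argument is supplied. So there is no ``paper's own proof'' to compare against. That said, your sketch has a genuine geometric gap that is worth flagging, because the paper itself identifies exactly this obstruction.

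Your covering step asserts that for each curvilinear triangle $T$ of the packing, $\diam T$ is comparable \emph{with universal constants} to the radius $r_T$ of the Soddy circle inscribed in $T$; and in your final step you assert that after removing all discs of radius $\ge\delta$ the remaining gaps have diameter $\lesssim\delta$. Both claims are false for Apollonian packings. Take two fixed tangent circles of the packing meeting at a point $p$; at every generation there is a triangle with one cusp at $p$, bounded by these two circles and a third circle of radius $\epsilon\to0$. A direct computation (Descartes' relation for the inscribed curvature, and the location of the cusps) gives $\diam T\asymp\sqrt{\epsilon}$ while $r_T\asymp\epsilon$, so $\diam T/r_T\asymp\epsilon^{-1/2}\to\infty$. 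The same triangles show that gaps of diameter $\asymp\sqrt{\delta}$ survive after removing all discs of radius $\ge\delta$. This is precisely the phenomenon the paper records in Remark~2 when it explains that Apollonian packings fail condition~\eqref{eq:cond2}: near a tangency the largest nearby packing circle has radius comparable to the \emph{square} of the scale, not to the scale itself. Consequently your upper bound $\dim_H\cS\le E$ does not follow from the triangle covers as written, and the $\liminf$ argument via $N_\delta(\cS)\lesssim N(1/\delta)$ also breaks down.

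Boyd's actual proofs handle this degeneration by working M\"obius-equivariantly with the Apollonian group rather than by assuming uniform roundness of the triangles; the ``bounded distortion'' you invoke has to be formulated for the group action, not for the static triangles. The elementary part of your outline, $\limsup_{x\to\infty}\log N(x)/\log x=E$, is correct and coincides with what the paper proves in Theorem~\ref{th:111} under~\eqref{eq:cond1} alone.
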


It is natural to ask whether an analogue of Theorem~\ref{th:2} holds for other subsets of the plane that have fractal nature akin to the residual set of an Apollonian circle packing. 
In this paper we propose to investigate this question for a family of sets with controlled geometry on all scales, the so-called homogeneous sets defined in Section~\ref{S:HS}. These are the residual sets of packings by topological (rather than geometric) discs that have roughly rounded shapes, appear at all locations and on all scales, and are relatively separated in the sense that two large elements of the packing cannot be too close to each other. The last condition can be replaced by the assumption that the elements of a packing are uniformly fat. One example of such a homogeneous set is the standard Sierpi\'nski carpet $S_3$ (see Figure \ref{f:ser} below). For a  homogeneous  set we establish asymptotic relationships  between a certain natural analogue of the curvature distribution function and the Minkowski dimensions of the corresponding residual set $\cS$ (Section~\ref{S:DF}). 
It turns out that for a general homogeneous set the exponent of the corresponding packing equals the upper Minkowski dimension of $\cS$ rather than the Hausdorff dimension  of $\cS$. 
As the main application of these results we prove an analogue of Theorem \ref{th:2} for Sierpi\'nski carpets that are the Julia sets of hyperbolic rational maps (Section~\ref{s:jls}).
Finally, in Section~\ref{CR} we state some open problems.

\medskip\noindent 
\textbf{Acknowledgments.} The authors would like to thank Mario Bonk for many useful comments and suggestions.

\section{Homogeneous sets and carpets}\label{S:HS}

Let $\{C_k\}_{k\in\bN\cup\{0\}}$ be a collection of simple closed curves in the plane such that each $C_k,\ k\in\mathbb N$, is enclosed by $C_0$ (i.e., $C_k$ is contained in the closure of the bounded component $D_0$ of the complement of $C_0$)  and suppose that for each pair $j, k\in\mathbb N$ with $j\neq k$, the bounded complementary components $D_j$ and $D_k$ of $C_j$ and $C_k$, respectively, are disjoint. In analogy with Apollonian circle packings, we call the collection of curves $\cP=\{C_k\}_{k\in\bN\cup\{0\}}$, or, interchangeably, the collection of the corresponding topological discs $\{D_k\}_{k\in\bN\cup\{0\}}$, a \emph{packing}. The \emph{residual set} $\cS$ \emph{associated to} such a packing  $\cP$ (or, simply, the {\em residual set of} $\cP$) is the compact subset of the closure $\overline D_0$  obtained by removing from $\overline D_0$ all the domains $D_k,\ k\in\mathbb N$. 
One can similarly define packings and the associated residual sets in the sphere rather than in the plane. In the following it makes no difference whether we consider planar or spherical packings. 
We say that the residual set $\cS$ of a packing $\cP$  is \emph{homogeneous} if it satisfies~properties \eqref{eq:cond1}, \eqref{eq:cond2}, and~\eqref{eq:cond2.5}, or \eqref{eq:cond1}, \eqref{eq:cond2}, and~\eqref{eq:cond3.5} below. 

\sbr

If one hopes to prove an analogue of Theorem~\ref{th:2} for a packing $\cP=\{C_k\}_{k\in\bN\cup\{0\}}$, then it is reasonable to assume  that domains $D_k$'s
corresponding to
 curves $C_k$'s  have roughly rounded shapes. 
More precisely,
\begin{gather}\label{eq:cond1}
\begin{split}
&\text{there exists a constant }\al\geq 1\text{ such that for each }D_k\text{ there exist inscribed }\\ 
&\text{and circumscribed concentric circles of radii }r_k\text{ and }R_k,\text{ respectively, with}\\
\end{split}\\
\frac{R_k}{r_k}\leq\al.\notag
\end{gather}
For example, \eqref{eq:cond1} holds if all $C_k$'s are circles 
or squares.

\sbr

It also seems clear that
 one needs to impose a condition on the residual set $\cS$ associated to $\cP=\{C_k\}_{k\in\bN\cup\{0\}}$ that imitates the fractal nature of the residual set of an Apollonian circle packing.  We consider  the following condition, which is standard in complex dynamics and geometric group theory:
\begin{gather}\label{eq:cond2}
\begin{split}
& \text{there exists a constant }\be\geq 1\text{ such that for any }p\in \cS\text{ and }r,\ 
0<r\leq\diam\,\cS,\\
&\text{there exists a curve } C_k\text{  satisfying }
C_k\cap B(p,r)\ne \emptyset\text{ and }\\
\end{split}\\
\frac{1}{\be}r\leq \diam \, C_k\leq \be r. \notag
\end{gather}
Here and in what follows $B(p,r)$  denotes the open disc of radius $r$ centered at $p$, and $\diam\, X$ stands for the diameter of a metric space $X$.  Geometrically, property \eqref{eq:cond2} means that curves $C_k$'s appear at all locations and on all scales. 
\begin{rem}
Note that properties~\eqref{eq:cond1} and~\eqref{eq:cond2} readily imply that the Lebesgue measure of $\cS$ is zero.  Indeed, otherwise $\cS$ would contain a Lebesgue density point, which clearly contradicts to~\eqref{eq:cond1} and~\eqref{eq:cond2}. 
\end{rem}

Further, we require the following condition of relative separation,  also standard in complex dynamics and geometric group theory: 
\begin{gather}\label{eq:cond2.5}
\begin{split}
&\text{there exists a constant }\delta>0\text{ such that for any }j\neq k\text{ we have }\\
\end{split}\\
\Delta(C_j, C_k):=\frac{{\rm dist}(C_j,C_k)}{\min\{\diam\, C_j,\diam\, C_k\}}\ge\delta. \notag
\end{gather}
Here ${\rm dist}(C_j, C_k)$ stands for the distance between $C_j$ and $C_k$. One can visualize property \eqref{eq:cond2.5} as forbidding
two large curves $C_j$ and $C_k$ to be quantitatively too close to each other.

\sbr

\begin{rem}
While extending analysis from Apollonian circle packings to more general packings, one would like to recoup an important property of rigidity. Conditions \eqref{eq:cond1}, \eqref{eq:cond2}, and \eqref{eq:cond2.5} can be considered as a way of doing so.

\sbr

%
Clearly, Apollonian circle packings satisfy \eqref{eq:cond1}. However, they do not satisfy either \eqref{eq:cond2} or~\eqref{eq:cond2.5}. 
Indeed,  the failure of~\eqref{eq:cond2.5} is immediate because in Apollonian circle packings the circles are allowed to touch. We now show that~\eqref{eq:cond2} fails. Let $C_1$ and $C_2$ be two tangent circles in an Apollonian circle packing having disjoint interiors and let $p$ denote the common point of $C_1$ and $C_2$.
Assume $\epsilon_k$ is a sequence of positive numbers with $\epsilon_k\to 0$ as $k\to\infty$. For each $k$ denote by $r_k$ the radius of a circle from the packing other than $C_1$ or $C_2$ that intersects $B(p,\epsilon_k)$ and has the largest radius. It can be checked that the sequence $r_k$ goes to zero faster than $\epsilon_k$ when $k\to\infty$ (in fact, $r_k$ is majorized by $\epsilon_k^2$). Thus homogeneous sets do not generalize Apollonian circle packings but rather complement them.
\end{rem}

As pointed out in the previous paragraph,  condition~\eqref{eq:cond2.5}  does not allow for two elements of a packing to touch. It turns out that in the main results of the paper this condition can be replaced by the following co-fatness condition for $\cS$ (see~\cite{S95}):
\begin{gather}\label{eq:cond3.5}
\begin{split}
&\text{there exists a constant }\tau>0\text{ such that for every }k\text{ and each disc }B(p,r)\\
&\text{that is centered on }D_k\text{ and does not contain }D_k,\text{ we have }\\
\end{split}\\
{\rm area}(D_k\cap B(p,r))\ge\tau r^2.\notag
\end{gather}

\begin{rem}
It is easy to check that neither of \eqref{eq:cond2.5} or \eqref{eq:cond3.5} implies the other. Condition~\eqref{eq:cond3.5} is often easier to check, e.g, as \eqref{eq:cond1} it holds in the case when all $C_k$'s are circles or squares. Moreover, the Sierpi\'nski gasket (see Figure~\ref{f:gask}) satisfies \eqref{eq:cond3.5} but not \eqref{eq:cond2.5}. It is not hard to see that the Sierpi\'nski gasket also satisfies~\eqref{eq:cond1} and~\eqref{eq:cond2}, and so it is homogeneous.
\begin{figure}
[htbp]
\includegraphics[height=35mm]{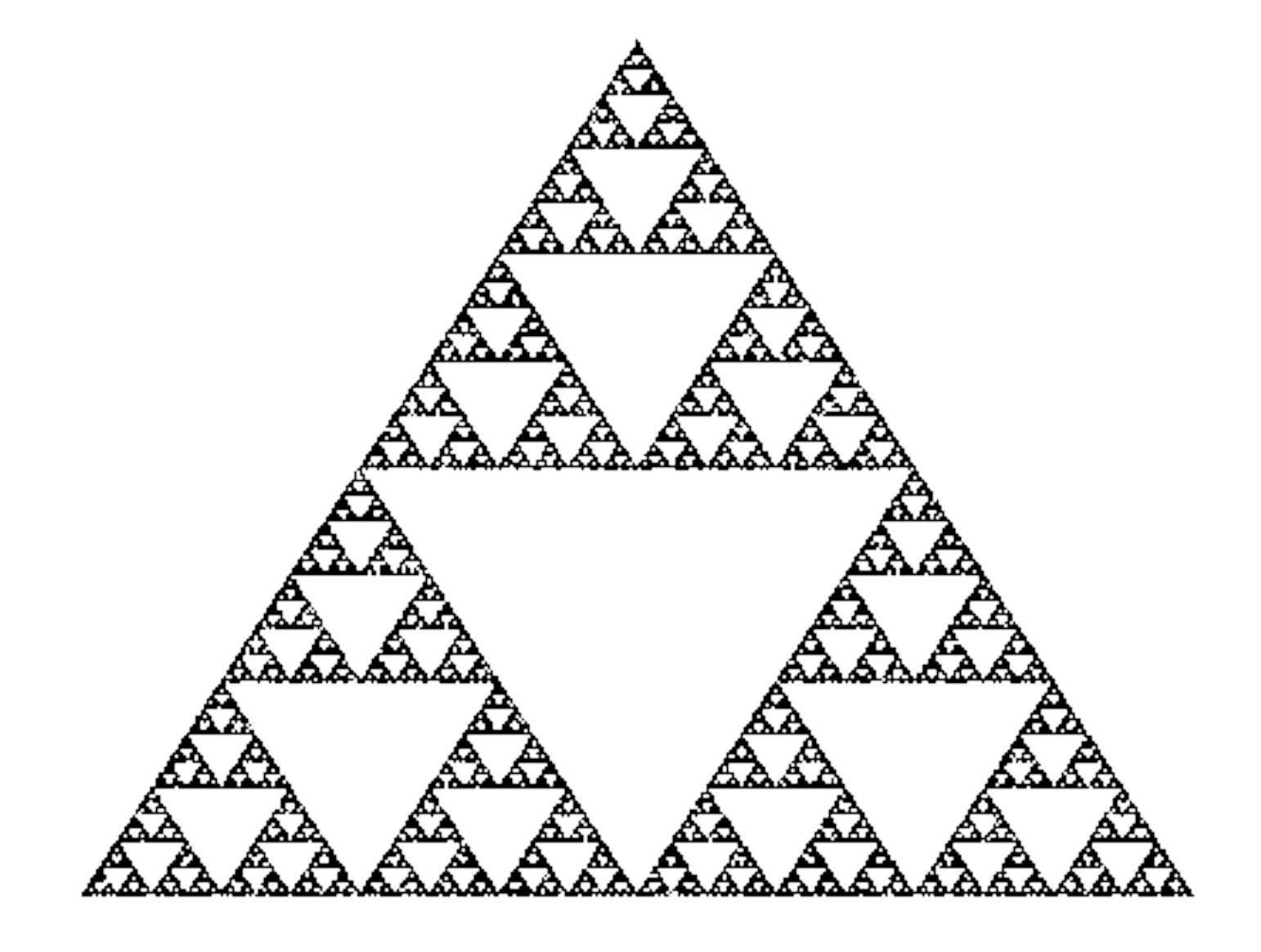}\caption{The Sierpi\'nski gasket.}\label{f:gask}
\end{figure}

\end{rem}

\begin{example}\label{ex:st}
Standard Sierpi\'nski carpets are homogeneous. Recall that for an odd integer $p>0$ the {standard Sierpi\'nski} $p$-{carpet} $S_p$ (defined up to translations and scalings of the plane) is obtained as follows.
As the first step of the construction we subdivide  
a closed square in the plane
into $p^2$ equal subsquares in the obvious way
and then remove the interior of the middle square (the middle square is well defined because $p$ is odd). In the second step we perform the same operations (subdivide into $p^2$ equal subsquares and remove the interior of the middle square) on the $p^2-1$ subsquares remaining after the first step. If the process is continued indefinitely, then what is 
left is called the {\em standard Sierpi\'nski $p$-carpet} (see Figure \ref{f:ser} below for $p=3$). A proof that $S_3$ and $S_5$ are homogeneous is contained in Example~\ref{eq:weird} below. For a general $p$ the arguments are similar.  
\begin{figure}
[htbp]
\includegraphics[height=35mm]{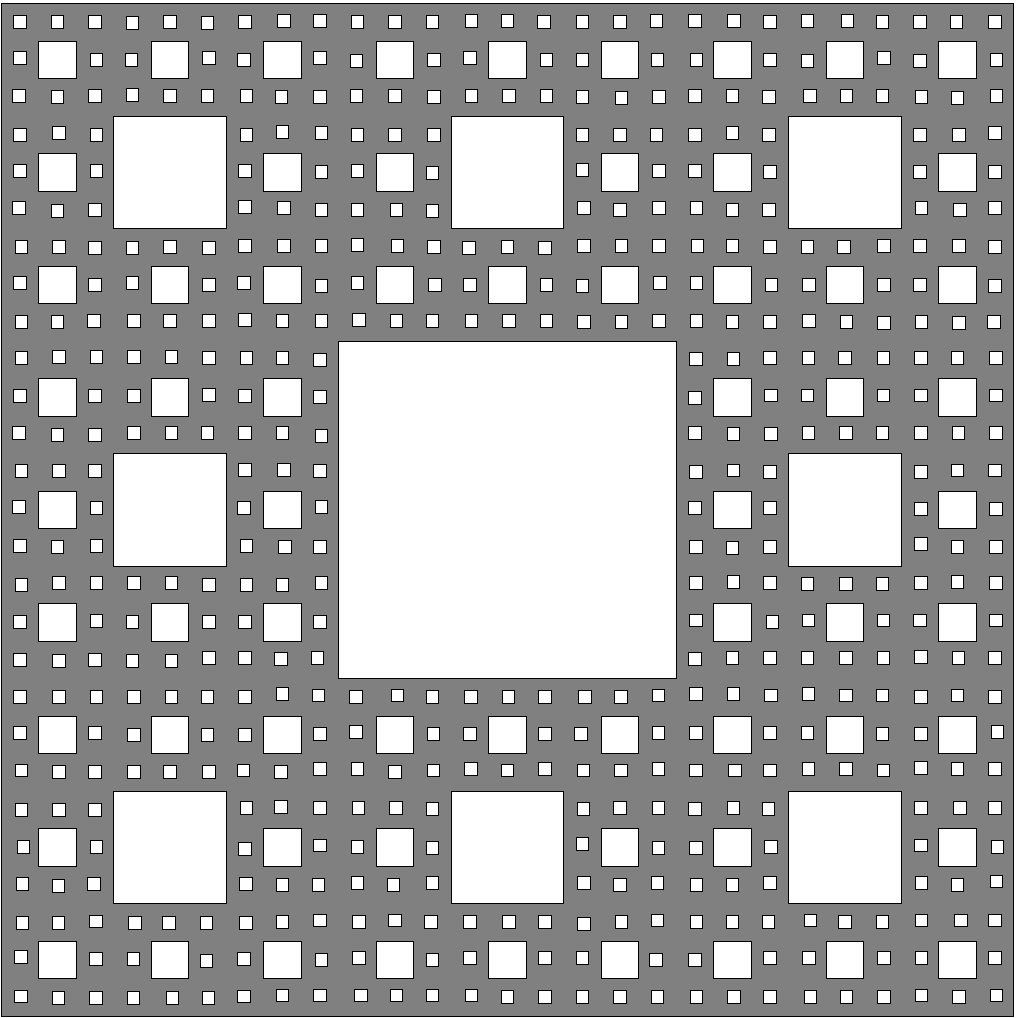}\caption{The standard Sierpi\'nski 3-carpet $S_3$.}\label{f:ser}
\end{figure}
\end{example}

By definition,
a \emph{Sierpi\'nski carpet}, or \emph{carpet} for short,  is any topological space that is homeomorphic to $S_3$. A \emph{peripheral circle} of a carpet $S$ is any simple closed curve in $S$ that corresponds under a homeomorphism to the boundary of one of the removed squares in the construction of $S_3$.  Whyburn's characterization~\cite{W58} states that a metrizable topological space is a carpet if and only if it is a planar continuum of topological dimension 1 that is locally connected and has no local cut points. Here a \emph{local cut point} is a point whose removal separates  the space locally. This gives a way to produce a large supply of carpets as  follows. Suppose that  $\overline D_0$ is a closed topological disc in the plane or in the sphere and $\{D_k\}_{k\in\mathbb N}$ are pairwise disjoint open topological discs contained in $D_0$ that are  bounded by simple closed curves.  
Then the space $\cS=\overline D_0\setminus\cup_{k\in\mathbb N} D_k$ is a carpet if and only if $\cS$ has no interior, $\diam\, D_k\to0$ as $k\to\infty$, and for each pair $j, k\in\mathbb N\cup\{0\},\ j\neq k$, the boundaries $\partial D_j$ and $\partial D_k$ are disjoint. In this way, any carpet is represented naturally as the residual set associated to a packing $\cP=\{\partial D_k\}_{k\in\mathbb N\cup\{0\}}$, where the boundaries of $D_k$'s are peripheral circles.
As a partial converse, any homogeneous residual set $\cS$ satisfying~\eqref{eq:cond2.5} is a carpet. Indeed, \eqref{eq:cond1} implies $\diam\, D_k\to0$ as $k\to\infty$, \eqref{eq:cond2} gives that $\cS$ has no interior,  and~\eqref{eq:cond2.5} implies that $\partial D_j$ and $\partial D_k$ are disjoint for all $j\neq k$.

\begin{example}\label{ex:jul}
If the Julia set $\cJ(f)$ of a hyperbolic rational map $f$ is a carpet, then it is homogeneous (see the proof of  Theorem~\ref{th:Jul} below; definitions of a Julia set and hyperbolicity can be found in
Section~\ref{s:jls} below). An example of such a map is $f(z)=z^2-1/(16 z^2)$ (see Figure~\ref{f:jul} for its Julia set). 


\begin{figure}
[htbp]
\includegraphics[height=35mm]{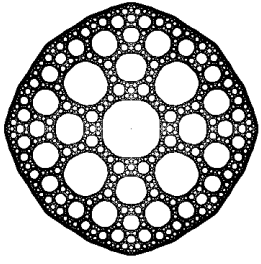}\caption{The Julia set of $f(z)=z^2-\frac1{16z^2}$.}\label{f:jul}
\end{figure}

It is known that if the limit set $\mathcal L(G)$ of a convex-cocompact Kleinian group $G$ is a carpet, then it is homogeneous. (This can be extracted from~\cite{KK00}.) In particular, if $G$ is the fundamental group of a compact hyperbolic 3-manifold with  non-empty totally geodesic boundary, then its limit set $\mathcal L(G)$ is a homogeneous carpet. We do not prove these statements as we do not need them. For a comprehensive  treatment of asymptotics of the curvature distribution function for    circle packings invariant under non-elementary Kleinian groups see~\cite{o}.
\end{example}
Assume that $\cP=\{C_k\}_{k\in\bN\cup\{0\}}$ is a packing whose associated residual set $\cS$ satisfies~\eqref{eq:cond1}. Similarly to the case of Apollonian circle packings we define the \emph{exponent} $E=E(\cP)$ of $\cP$ as
$$
E=\inf\{t\in\bR\,\,\vert\,\,\sum_{k}(\diam\,C_k)^t<\infty\}.
$$
Let $a_k=\diam\,C_k$ and for a natural $m$ denote $M(t,m)=\sum_{k=1}^ma_k^t$, $M(t)=\sum_{k}a_k^t$. As in the case of Apollonian circle packings we have $M(2)<\infty$. Indeed, using \eqref{eq:cond1} we have
$$
\pi\sum_ka_k^2\leq 4\pi\sum_kR_k^2\leq 4\al^2\sum_k\pi r_k^2\leq 4\al^2\text{Area}(D_0),
$$
since the circles inscribed in $C_k$'s are pairwise disjoint.
Also, without loss of generality we can assume that all $a_k\le1$, and thus for $t\geq t'\ge0$ we have $M(t,m)\leq M(t',m)$ for any $m$. Finally, $M(0)=\infty$. These imply that $E<\infty$ and 
$$
E=\inf\{t\in\bR\,\,\vert\,\,\sum_{k}(\diam\,C_k)^t<\infty\}=
\sup\{t\in\bR\,\,\vert\,\,\sum_{k}(\diam\,C_k)^t=\infty\}.
$$

The \emph{curvature distribution function} $N(x)$ of $\cP$ is defined by
$$
N(x)=\#\{k\,\,\vert\,\, (\diam\, C_k)^{-1}\leq x\},\quad x\in\bR.
$$
It turns out that for a general homogeneous residual set $\cS$ associated to a packing $\cP$ the limit of ${\log N(x)}/{\log x}$ does not exist when $x$ tends to infinity, i.e.,
\bbe
\label{eq:oops}
\limsup_{x\to\infty}\frac{\log N(x)}{\log x}\ne\liminf_{x\to\infty}\frac{\log N(x)}{\log x}.
\ee
Also,  the Hausdorff dimension of $\cS$ is not necessarily  equal to $E(\cP)$ (see Example~\ref{eq:weird} below). This contrasts Theorem \ref{th:2}.
\begin{example}\label{eq:weird}
Using the idea of Sierpi\'nski carpets we now construct the following packing for which \eqref{eq:oops} holds. To a closed square in the plane we apply the first $n_1$ steps of the construction of the $3$-carpet. Then, to each of the remaining subsquares we apply the first $n_2$ steps of the construction of the $5$-carpet. After that, to each of the remaining subsquares we apply the first $n_3$ steps of the construction of the $3$-carpet again. This way, alternating, we continue indefinitely. For each sequence $\sg=\{n_1,n_2,n_3,\ldots\}$ we therefore have  a packing $\cP=\cP(\sg)$ and the associated residual set $\cS=\cS(\sg)$ depending on $\sg$. 
In this construction we allow some $n_k$ to be infinite, in which case we assume that the sequence $\sigma$ is finite and its last element is infinity. We also allow the first $n_1$ steps to be the steps of the construction of the 5-carpet. 
Whyburn's characterization implies that $\cS$ is a carpet. 

\sbr

We now check that for any $\sg$ the set $\cS=\cS(\sg)$ is homogeneous, namely it satisfies conditions~\eqref{eq:cond1}, \eqref{eq:cond2}, and~\eqref{eq:cond2.5}. In fact, it also clearly satisfies~\eqref{eq:cond3.5}. Indeed, \eqref{eq:cond1} is trivially satisfied, since each bounded complementary component of $\cS$ is a square. To check~\eqref{eq:cond2} we fix arbitrary $p\in\cS$ and $0<r\le\diam\, \cS$. After performing a number of steps of dividing and removing subsquares as explained in the previous paragraph we will denote by $Q$ one of the biggest subsquares that turn out to be inside $B(p,r)$. Assume also that $Q$ is not a complementary component, i.e., $Q$ is not a ``hole".
More precisely, suppose that $n=n_1n_2\dots n_{k-1}n_k'$, where $1\le n_k'\le n_k$, is the smallest number such that one of the remaining subsquares $Q$ of side length $s=3^{-n_1}5^{-n_2}\dots 3^{-n_k'}$ or $3^{-n_1}5^{-n_2}\dots 5^{-n_k'}$, depending on whether $k$ is odd or even, 
is contained in $B(p,r)$. From the minimality of $n$ it is immediate that $r/\beta'\le s\le \beta'r$, where $\beta'\ge1$ is an absolute constant. Now we choose $C$ to be the boundary of the middle square in the subdivision of $Q$. Then 
$$
\diam\, C=\frac{\sqrt{2}s}{3}\quad\text{ or }\quad \frac{\sqrt{2}s}{5},
$$
and~\eqref{eq:cond2} follows with $\beta=5\beta'/\sqrt{2}$. Finally, to verify~\eqref{eq:cond2.5} let $C_j$ and $C_k$ be the boundaries of two distinct complementary squares in the construction of $\cS$. We may assume that $\diam\, C_j\le\diam\, C_k$. 
Let $Q$ be the subsquare in the construction of $\cS$ so that $C_j$ is the boundary of the middle square in the subdivision of $Q$. Because $\diam\, C_j\le\diam\, C_k$, the curve $C_k$ does not intersect the interior of $Q$ and hence 
$$
\Delta(C_j,C_k)=\frac{{\rm dist}(C_j,C_k)}{\diam\, C_j}\ge\frac{{\rm dist}(C_k,\partial Q)}{\diam\, C_j} \ge\frac{1}{\sqrt{2}},
$$
i.e., \eqref{eq:cond2.5} follows with $\delta=1/\sqrt{2}$. So,  any $\cS$ in the above construction satisfies~\eqref{eq:cond1}, \eqref{eq:cond2}, \eqref{eq:cond2.5}, and~\eqref{eq:cond3.5} and, in fact, the constants $\alpha, \beta, \delta, \tau$ do not depend on  sequence $\sigma$. 

\sbr

If  $\cS$ is the  3-carpet, then one can easily check that
$$\lim_{x\to\infty}\frac{\log N(x)}{\log x}=\frac{\log8}{\log3}.
$$ 
Likewise, if $\cS$ is the 5-carpet, then 
$$
\lim_{x\to\infty}\frac{\log N(x)}{\log x}=\frac{\log24}{\log5}.
$$
It is an elementary exercise to show that there exists a sequence $\sg$ converging to $\infty$ fast enough so that 
$$
\limsup_{x\to\infty}\frac{\log N(x)}{\log x}=\frac{\log 24}{\log 5}\quad\text{ and }\quad
\liminf_{x\to\infty}\frac{\log N(x)}{\log x}=\frac{\log 8}{\log 3}<\frac{\log 24}{\log 5}.
$$
Indeed, given a sequence $\{\eps_1,\eps_2,\ldots \}$ of positive numbers converging to zero it is possible to find a large enough $n_1$ so that for any $n'_2,n'_3,\ldots$ the curvature distribution function $N(x)$ corresponding to $\cS(\sg)$, $\sg=\{n_1,n'_2,n'_3,\ldots \}$, satisfies $\log N(x_1)/\log x_1<\log8/\log3+\epsilon_1$ for some large $x_1$. 
Similarly, it is possible to find a large enough $n_2$ so that for any $n'_3,n'_4,\ldots$ the curvature distribution function $N(x)$ corresponding to $\cS(\sg)$, $\sg=\{n_1,n_2,n'_3,n'_4,\ldots \}$, satisfies
$\log N(x_2)/\log x_2>\log24/\log5-\epsilon_2$ 
for some $x_2>x_1$, and so on. 
We leave the (easy) details to the reader. 

Also,  the Hausdorff dimension is always at most the lower Minkowski dimension (see the definition of the latter below). Thus, according to Proposition~\ref{p:dual} below 
for such a sequence $\sigma$ and the corresponding set $\cS$ one has 
$$\dim_H\cS\le\liminf_{x\to\infty}\frac{\log N(x)}{\log x}=\frac{\log8}{\log 3},$$ where $\dim_H\cS$ denotes the Hausdorff dimension of $\cS$. Theorem~\ref{th:111} below in addition shows that $$E(\cP)=\limsup_{x\to\infty}\frac{\log N(x)}{\log x}=\frac{\log24}{\log5}.$$
\end{example}

\section{Curvature distribution function and  Minkowski dimensions}\label{S:DF}

As we saw in Example \ref{eq:weird} above,
for a general homogeneous residual  set $\cS$, i.e., the residual set $\cS$ of a packing $\cP$ satisfying \eqref{eq:cond1}, \eqref{eq:cond2}, and~\eqref{eq:cond2.5}, or \eqref{eq:cond1}, \eqref{eq:cond2}, and~\eqref{eq:cond3.5}, $E$ is not equal to the Hausdorff dimension $\dim_H\cS$ of $\cS$ in Theorem~\ref{th:2}. 
We show that the right analogue is the  upper Minkowski (or the upper box counting) dimension of $\cS$, which is another notion of dimension applied to fractals. Suppose $n(\eps)$ denotes the maximal number of disjoint open discs of radii $\eps>0$ centered on $\cS$.
Then the \emph{Minkowski dimension} $\dim_{box}\cS$ of $\cS$ is defined as
$$
\dim_{box}\cS=\lim_{\eps\to 0}\frac{\log n(\eps)}{\log(1/\eps)},
$$
provided the limit exists. In general,
this limit does not exist  and in that case one defines the \emph{upper  Minkowski dimension}  $\dim_{ubox}\cS$ (respectively, lower Minkowski dimension $\dim_{lbox}\cS$) as the corresponding upper limit (respectively, lower limit). 
Note that there are other equivalent definitions of Minkowski dimensions, e.g., where $n(\epsilon)$ is the minimal number of open discs of radii $\epsilon$ required to cover the set $\cS$, or $n(\epsilon)$ is the minimal number of open discs of radii $\epsilon$ centered on $\cS$ required to cover $\cS$, etc.  

\sbr

The following proposition is a certain duality result between a  packing and the associated residual set
in the case when the latter is homogeneous.
\begin{prop}\label{p:dual}
If the residual set 
$\cS$ of a packing $\cP$ is homogeneous, then
$$
\begin{aligned}\label{eq:dual}
\limsup_{x\to\infty}\frac{\log N(x)}{\log x}&=\dim_{ubox}\cS,\\
\liminf_{x\to\infty}\frac{\log N(x)}{\log x}&=\dim_{lbox}\cS,
\end{aligned}
$$
where $N(x)$ is the curvature distribution function of $\cP$.
\end{prop}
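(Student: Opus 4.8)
The plan is to show that the counting function $N(x)$ and the packing function $n(\epsilon)$ (the maximal number of pairwise disjoint $\epsilon$-discs centered on $\cS$) grow at comparable rates, in the sense that each can be sandwiched by the other up to a fixed rescaling of the scale parameter and a harmless polylogarithmic factor. Passing to $\log(\cdot)/\log(1/\epsilon)$ then makes all such errors disappear and yields both equalities simultaneously. Throughout I set $\epsilon=1/x$, so that $N(1/\epsilon)=\#\{k:\diam C_k\ge\epsilon\}$, and I use repeatedly the following transfer principle: since $n$ is non-increasing and $\log(1/(c\epsilon))/\log(1/\epsilon)\to1$ as $\epsilon\to0$, any inequality $A(\epsilon)\le\kappa\,(\log(1/\epsilon))^{s}\,B(c\epsilon)$ with fixed constants $\kappa,s,c>0$ forces the $\limsup$ and the $\liminf$ of $\log A(\epsilon)/\log(1/\epsilon)$ to be at most those of $\log B(\epsilon)/\log(1/\epsilon)$.

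First I would establish $n(\epsilon)\le M\,N(\beta/\epsilon)$ with $M$ depending only on $\beta$. Given $n(\epsilon)$ disjoint discs $B(p_i,\epsilon)$ with $p_i\in\cS$, property~\eqref{eq:cond2} applied with $r=\epsilon$ produces for each $i$ a curve $C_{k_i}$ meeting $B(p_i,\epsilon)$ with $\epsilon/\beta\le\diam C_{k_i}\le\beta\epsilon$. A fixed curve $C$ of diameter $\le\beta\epsilon$ can be assigned to only boundedly many of the $p_i$: all such $p_i$ lie within distance $(1+\beta)\epsilon$ of any point of $C$, so their disjoint $\epsilon$-discs fit inside a disc of radius $(2+\beta)\epsilon$, and comparing areas bounds their number by $(2+\beta)^2=:M$. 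Hence $n(\epsilon)\le M\cdot\#\{k:\diam C_k\ge\epsilon/\beta\}=M\,N(\beta/\epsilon)$, which by the transfer principle gives $\dim_{ubox}\cS\le\limsup_{x\to\infty}\log N(x)/\log x$ and $\dim_{lbox}\cS\le\liminf_{x\to\infty}\log N(x)/\log x$.

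For the reverse inequalities I would manufacture many disjoint discs centered on $\cS$ from the curves with $\diam C_k\ge\epsilon$, using first that each peripheral circle satisfies $C_k\subseteq\cS$, so any chosen $p_k\in C_k$ is a legitimate center. Under the separation hypothesis~\eqref{eq:cond2.5} this is immediate: if $\diam C_j,\diam C_k\ge\epsilon$ then $|p_j-p_k|\ge{\rm dist}(C_j,C_k)\ge\delta\min(\diam C_j,\diam C_k)\ge\delta\epsilon$, so the $p_k$ are $\delta\epsilon$-separated and the discs $B(p_k,\delta\epsilon/2)$ are pairwise disjoint, whence $N(1/\epsilon)\le n(\delta\epsilon/2)$. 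Under the co-fatness hypothesis~\eqref{eq:cond3.5} the curves may touch, so I would argue scale by scale: group the curves into dyadic bands $\cF_j=\{k:2^j\epsilon\le\diam C_k<2^{j+1}\epsilon\}$ with $0\le j\lesssim\log(1/\epsilon)$ and $\rho_j=2^j\epsilon$, and in each band extract a maximal subfamily $I_j$ with $\{B(p_k,\rho_j)\}_{k\in I_j}$ disjoint, so $|I_j|\le n(\rho_j)\le n(\epsilon)$. The multiplicity is bounded: the $\tau$-fat domains $D_k$, $k\in\cF_j$, whose centers cluster within $2\rho_j$ of a fixed $p_{k'}$ all lie in a disc of radius $4\rho_j$, are pairwise disjoint, and each has area $\ge\tau(\rho_j/2)^2$ by applying~\eqref{eq:cond3.5} to a disc of radius $\rho_j/2$ centered in $D_k$ (which cannot contain $D_k$, as $\diam D_k\ge\rho_j$); comparing areas bounds their number by $O(1/\tau)=:M$. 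By maximality $|\cF_j|\le M|I_j|\le M\,n(\epsilon)$, and summing over the $\lesssim\log(1/\epsilon)$ bands gives $N(1/\epsilon)\le C\log(1/\epsilon)\,n(\epsilon)$. In either case the transfer principle yields $\limsup_{x}\log N/\log x\le\dim_{ubox}\cS$ and $\liminf_{x}\log N/\log x\le\dim_{lbox}\cS$, and combined with the first step this proves both equalities.

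The first step and the separated case of the second step are soft area-packing arguments; the only genuinely delicate point is the bounded-multiplicity estimate in the co-fat case, namely controlling how many large, possibly mutually tangent curves can crowd near a single point. This is exactly what forces the dyadic banding together with the use of~\eqref{eq:cond3.5} to convert fatness into a uniform lower area bound at each scale. The extra $\log(1/\epsilon)$ factor coming from summing over $\sim\log(1/\epsilon)$ bands is the price of banding, but it is invisible after dividing by $\log(1/\epsilon)$ and so affects neither Minkowski dimension.
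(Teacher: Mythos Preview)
Your proof is correct. The first half---bounding $n(\epsilon)$ above by $M\,N(\beta/\epsilon)$---is essentially the same as the paper's. For the reverse bound the approaches diverge. In the separated case~\eqref{eq:cond2.5} your direct argument (pick one point on each large $C_k$ and use~\eqref{eq:cond2.5} to see these points are $\delta\epsilon$-separated) is in fact shorter than the paper's route, which instead covers $\cS$ by the doubled discs of a maximal $\epsilon$-packing and bounds how many large $C_k$ can meet a single such disc. In the co-fat case~\eqref{eq:cond3.5} the paper is sharper: rather than band dyadically and pay a $\log(1/\epsilon)$ factor, it works at a single scale by splitting the curves that meet a fixed $B(p,2\epsilon)$ into those with $\diam\,C_k>7\epsilon$, whose number is bounded by~\eqref{eq:cond3.5} applied at radius $\epsilon$, and those with $\epsilon/\beta\le\diam\,C_k\le 7\epsilon$, whose number is bounded via the inscribed discs from~\eqref{eq:cond1}. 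This gives a clean $N(\beta/\epsilon)\le\gamma_2\,n(\epsilon)$ with no logarithmic loss, though for the dimension statement your extra $\log$ is indeed invisible. One minor technicality: in your co-fat multiplicity step, when $\diam\,D_k=\rho_j$ exactly the disc of radius $\rho_j/2$ could in principle contain $D_k$; this is trivially repaired by shrinking the radius slightly, or by using the inscribed disc from~\eqref{eq:cond1} to lower-bound ${\rm area}(D_k)$ directly (which, incidentally, shows that in your banded argument~\eqref{eq:cond1} alone already controls the multiplicity, without~\eqref{eq:cond3.5}).
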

We will need the following lemma.
\begin{lem}\label{l:bnds}
Assume that $\cS$ is the residual set associated to a packing $\cP=\{C_k\}_{k\in\bN\cup\{0\}}$ satisfying~\eqref{eq:cond1} and~\eqref{eq:cond2.5} {\rm (}or~\eqref{eq:cond1} and~\eqref{eq:cond3.5}{\rm)}. Given any $\beta>0$
there exist constants $\ga_1=\ga_1(\be)\ge1$ that depends only on  $\beta$  and $\ga_2=\ga_2(\al,\be,\de)\ge1$ {\rm(}$\ga_2=\ga_2(\al,\be,\tau)\ge1${\rm)} that depends only on $\alpha$ in \eqref{eq:cond1}, $\beta$, and $\delta$ in ~\eqref{eq:cond2.5} {\rm(}that depends only on $\alpha$ in \eqref{eq:cond1}, $\beta$, and $\tau$ in ~\eqref{eq:cond3.5}{\rm)} such that 
for any collection $\mathcal C$ of disjoint open discs of radii $r$ centered on $\cS$ we have the following.
\begin{gather}\label{eq:cond3}
\begin{split}
\text{There are at most }\ga_1 \text{ discs in }\mathcal C \text{ that intersect any given } C_k\text{ with }\\
\end{split}\\
\diam \,C_k\leq \be r \notag,
\end{gather}
and
\begin{gather}\label{eq:cond4}
\begin{split}
\text{there are at most }\ga_2 \text{ curves } 
C_k \text{ intersecting any given disc in } 2\mathcal C\text{ and satisfying }\\
\end{split}\\
\frac{1}{\be}r\leq \diam \,C_k, \notag
\end{gather}
where $2\mathcal C$ denotes the collection of all open discs with the same centers as those in $\mathcal C$, but whose radii are $2r$.
\end{lem}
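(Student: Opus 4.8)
The two bounds are geometric packing estimates, and the plan is to prove each by a volume (area) comparison argument, treating the two hypotheses \eqref{eq:cond2.5} and \eqref{eq:cond3.5} in parallel.

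For \eqref{eq:cond3}: fix a curve $C_k$ with $\diam\,C_k\le\be r$ and consider all discs $B\in\mathcal C$ with $B\cap C_k\ne\emptyset$. Each such $B=B(p,r)$ has its center $p$ on $\cS$, and since $B$ meets $C_k$, the center $p$ lies within distance $r$ of $C_k$; hence every such $B$ is contained in a fixed disc $B(q,R)$ with $R$ a bounded multiple of $r$, say $R=(2+\be)r$, centered at any point $q\in C_k$ (using $\diam\,C_k\le\be r$). The discs in $\mathcal C$ are disjoint and each has area $\pi r^2$, so comparing total area inside $B(q,R)$ gives that their number is at most $(R/r)^2=(2+\be)^2$. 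Thus $\ga_1$ depends only on $\be$, as claimed; note that this direction uses neither \eqref{eq:cond1} nor the separation/fatness hypotheses, only disjointness of the discs.

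For \eqref{eq:cond4}: fix a disc $B'\in 2\mathcal C$ of radius $2r$ and consider all curves $C_k$ meeting $B'$ with $\diam\,C_k\ge r/\be$. The idea is that to each such $C_k$ we associate a definite amount of area inside a slightly enlarged disc, and these areas are essentially disjoint, so only boundedly many fit. Here the two hypotheses diverge. Under \eqref{eq:cond1} and \eqref{eq:cond3.5} the argument is cleaner: each $D_k$ is co-fat, so the portion of $D_k$ lying in a disc of radius comparable to $r$ has area at least a fixed multiple of $r^2$ (apply \eqref{eq:cond3.5} to a disc centered on $D_k$ of radius $\sim r/\be$, using $\diam\,D_k\ge\diam\,C_k/\,\text{const}\ge r/(\be\cdot\text{const})$); since the $D_k$ are pairwise disjoint and all lie in a fixed disc $B(c,R)$ with $R$ a bounded multiple of $r$, the number of them is at most $\pi R^2/(\tau (r/\text{const})^2)$, giving $\ga_2=\ga_2(\al,\be,\tau)$. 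Under \eqref{eq:cond1} and \eqref{eq:cond2.5} one instead uses the inscribed discs: by \eqref{eq:cond1} each $D_k$ contains a concentric disc of radius $r_k\ge R_k/\al\ge \diam\,C_k/(2\al)\ge r/(2\al\be)$, and the separation \eqref{eq:cond2.5} guarantees these inscribed discs (or suitable fixed shrinkings of them) are quantitatively separated and hence disjoint, so again an area comparison inside a fixed disc of radius $O(r)$ bounds their number by $\ga_2=\ga_2(\al,\be,\de)$.

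The routine parts are the explicit radius bookkeeping (tracking the various constants multiplying $r$), which I would not grind through. The main obstacle is the separation direction of \eqref{eq:cond4}: unlike the co-fatness case, where disjointness of the $D_k$ is given outright, under \eqref{eq:cond2.5} one must convert the relative-separation inequality $\Delta(C_j,C_k)\ge\de$ into genuine disjointness of a family of comparably sized discs, uniformly in $k$. The care needed is that \eqref{eq:cond2.5} controls $\mathrm{dist}(C_j,C_k)$ only relative to the smaller diameter, so I would restrict attention to the curves $C_k$ of comparable size (all having $\diam\,C_k\gtrsim r/\be$) and check that their inscribed discs, possibly shrunk by a factor depending on $\al,\be,\de$, are pairwise disjoint; the area estimate then closes the argument exactly as in the co-fat case.
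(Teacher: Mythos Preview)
Your argument for~\eqref{eq:cond3} is correct and matches the paper's exactly. Your co-fatness argument for~\eqref{eq:cond4} under~\eqref{eq:cond3.5} is also correct and is in fact a bit more direct than the paper's: the paper splits into ``very large'' and ``medium'' curves and only invokes co-fatness for the former, whereas you apply~\eqref{eq:cond3.5} uniformly to a small disc of radius $\sim r/\beta$ centered on each $D_k$, which works for all sizes at once.

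The gap is in~\eqref{eq:cond4} under~\eqref{eq:cond2.5}, and it comes from misidentifying the obstacle. You write that ``unlike the co-fatness case, where disjointness of the $D_k$ is given outright, under~\eqref{eq:cond2.5} one must convert the relative-separation inequality into genuine disjointness.'' But the $D_k$'s are pairwise disjoint \emph{by the definition of a packing}, in both cases; hence the inscribed discs are automatically disjoint and~\eqref{eq:cond2.5} is not needed for that. The real obstruction is the step ``an area comparison inside a fixed disc of radius $O(r)$'': there is no upper bound on $\diam\, C_k$, so a curve $C_k$ with $\diam\, C_k\gg r$ can pass through $B(p,2r)$ while its inscribed disc (centered at the common center of the inscribed/circumscribed circles) sits arbitrarily far from $p$. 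Thus the inscribed discs need not lie in any $O(r)$-ball, and your area estimate does not close. Your proposed fix---restricting to curves of ``comparable size (all having $\diam\, C_k\gtrsim r/\beta$)''---imposes only the lower bound already present in the hypothesis and does nothing to control the large curves.

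The paper resolves this by first using~\eqref{eq:cond2.5} directly to bound the number of \emph{very large} curves: if two distinct $C_j,C_k$ with $\diam>\beta' r:=(4/\delta)r$ both meet $B(p,2r)$, then ${\rm dist}(C_j,C_k)\le 4r$ forces $\Delta(C_j,C_k)<\delta$, a contradiction; so at most one such curve exists. The remaining curves satisfy $r/\beta\le\diam\, C_k\le\beta' r$, and \emph{now} the inscribed-disc area comparison works, since both the inscribed radii and the circumscribed radii are bounded multiples of $r$, forcing all inscribed discs to lie in a fixed $O(r)$-ball. This two-scale split is the missing ingredient in your plan.
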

\begin{proof}
By rescaling the Euclidean metric by $1/r$, we conclude that to find the bound $\gamma_1$ in~\eqref{eq:cond3} is the same as to find an upper bound on the number of disjoint open discs in the plane of radii 1 that intersect a given set $S$ of diameter at most $\beta$. 
If $B$ is a disc centered at a point of $S$ and whose radius is $\beta+2$, then $B$ contains $S$ as well as any disc of radius 1 that intersects $S$. 
Therefore, if $n$ is the number of disjoint open discs of radii 1 that intersect $S$, then by comparing areas we obtain  $n\le(\beta+2)^2$.

\sbr

To prove~\eqref{eq:cond4} we first observe that~\eqref{eq:cond2.5} or~\eqref{eq:cond3.5} imply the existence of constants $\beta'>0$ and $\nu\in\mathbb N$ such that the number of
$C_k$'s that intersect a given $B(p,2r)\in 2\cC$ and $\diam\, C_k>\beta' r$ is at most $\nu$. 
Indeed, if~\eqref{eq:cond2.5} holds, let $\beta'=4/\delta$ and suppose that there are two distinct curves $C_j, C_k$ that intersect $B(p, 2r)$ and  $\min\{\diam\, C_j, \diam\, C_k\}>\beta'r$. Then ${\rm dist}(C_j, C_k)\le4r$ and hence
$$
\Delta(C_j, C_k)<\frac{4r}{\beta'r}=\delta,
$$
which contradicts~\eqref{eq:cond2.5}, and therefore $\nu=1$. Note that in this case $\beta'$ depends only on $\delta$.

Now assume that~\eqref{eq:cond3.5} holds and let $\beta'=7$. For each $k$ such that $C_k$  intersects $B(p,2r)$ and $\diam\, C_k>\beta' r$,  the corresponding $D_k$ is not contained in $B(p,3r)$. Let $B_k=B(q,r)$ be a disc centered at $q\in D_k\cap B(p,2r)$. Then $B_k$ is contained in $B(p,3r)$, and hence does not contain $D_k$.  Therefore by~\eqref{eq:cond3.5} we have
$$
{\rm area}(D_k\cap B_k)\ge\tau r^2.
$$
Since any two distinct $D_j$ and $D_k$ are disjoint and all $B_k$'s are contained in $B(p,3r)$, for the number $\nu$ of ``large" $C_k$'s as above we have
$$
\nu\tau r^2\le\sum_k{\rm area}(D_k\cap B_k)\le{\rm area}(B(p,3r))=9\pi r^2.
$$
This gives $\nu\le9\pi/\tau$, a bound that  depends only on $\tau$.

Thus, to prove the existence of $\gamma_2$ it is enough to find a bound on  the number of $C_k$'s that intersect a given disc in $2\cC$ and satisfy
\bbe\label{eq:vspom}
\frac1{\beta}r\le\diam\, C_k\le\beta' r.
\ee 
Recall that by~\eqref{eq:cond1} there exists $\alpha\ge 1$ such that for any $C_k$ there exist inscribed and circumscribed concentric circles of radii $r_k$ and $R_k$, respectively, with $R_k/r_k\le\alpha$. Since the interiors $D_k$'s of distinct $C_k$'s are disjoint, the corresponding inscribed discs are disjoint. Moreover, 
for $C_k$ satisfying \eqref{eq:vspom} we have the following inequalities:
$$
\frac1{\beta} r\le \diam\, C_k\le 2R_k\le 2\alpha r_k\le\alpha\, \diam\, C_k\le\alpha\beta'r.
$$
The number of curves $C_k$ that intersect a given disc $B(p,2r)$ in $2\cC$ is bounded above by the number of corresponding circumscribed discs that intersect the same disc $B(p,2r)$. 
Therefore, to find a bound $\ga_2$ in~\eqref{eq:cond4} we need to find an upper bound on the number of disjoint open discs $B(p_k,r_k)$ of radii $r_k$  at least $r/(2\alpha\beta)$ and at most $\beta'r/2$ such that $B(p_k, \alpha r_k)$ intersects $B(p,2r)$ (recall that $R_k\le\alpha r_k$). Rescaling the Euclidean metric by $1/r$ as above, it is equivalent to finding an upper bound  on the number $n$ of disjoint open discs $B(p_k, r_k)$ of radii $r_k$ with $1/(2\alpha\beta)\le r_k\le \beta'/2$ such that $B(p_k,\alpha r_k)$ intersects $B(p,2)$. Similar to the proof of~\eqref{eq:cond3}, each disc $B(p_k,\alpha r_k)$  intersecting $B(p,2)$ is contained in $B(p, 2+\alpha\beta')$. Since $B(p_k, r_k)$'s are disjoint, we obtain  
$$
\sum_{i=1}^n r_k^2\le(2+\alpha\beta')^2.
$$ 
On the other hand,
$$
\sum_{i=1}^n r_k^2\ge\frac{n}{(2\alpha\beta)^2},
$$
and thus
$
n\le(2\alpha\beta(2+\alpha\beta'))^2,
$
which completes the proof of~\eqref{eq:cond4}.
\end{proof}

\begin{proof}[Proof of Proposition~$\ref{p:dual}$]
Let $\epsilon>0$ be arbitrary and let $\mathcal C$ be a maximal collection of disjoint open discs of radii $\epsilon$ centered on $\cS$. 

\sbr

We first establish an upper bound for $n(\eps)$ in terms of the function $N(x)$. For that one can define a function $f$ from $\cC$ to the set $\cA$ of all $C_k$'s with $\epsilon/\beta\le\diam\, C_k\le\beta\epsilon$ by assigning to each disc in $\cC$ a $C_k\in\cA$ intersecting the disc. Note that by \eqref{eq:cond2} every disc in $\mathcal C$ intersects at least one $C_k$ from $\cA$, so that $f$ is defined on the whole set $\cC$. Also, 
given any $C_k$ with $\diam\, C_k\le\beta\epsilon$, by~\eqref{eq:cond3}, there are at most $\gamma_1$ discs in $\mathcal C$ that intersect $C_k$.  
Recall that $n(\eps)$ is the number of elements in $\cC$, and the number of elements in $\cA$ is at most $N(\be/\eps)$. Hence,
comparing the sizes of $\cC$ and $\cA$ via $f:\cC\rar\cA$, we immediately obtain
$$
n(\epsilon)\le\gamma_1 N(\beta/\epsilon).
$$
Therefore,
$$
\dim_{ubox}\cS=\limsup_{\eps\to 0}\frac{\log n(\eps)}{\log(1/\eps)}\le\limsup_{\eps\to 0}\frac{\log(\ga_1 N(\beta/\epsilon))}{\log(1/\eps)}=
\limsup_{x\to\infty}\frac{\log N(x)}{\log x},
$$
and similarly,
$$
\dim_{lbox}\cS\le\liminf_{x\to\infty}\frac{\log N(x)}{\log x}.
$$

We now use an argument similar to the one in the preceding paragraph to obtain an upper bound for $N(x)$ in terms of $n(\eps)$. Namely, we define a function $g$ from the set $\cB$ of all $C_k$'s satisfying $\epsilon/\beta\le \diam\, C_k$ to $2\cC$ by assigning to each $C_k\in\cB$ a disc in $2\cC$ intersecting $C_k$. By the maximality of $\mathcal C$, the collection $2\mathcal C$  covers $\cS$ and hence $g$ is defined on the whole set $\cB$.
 By~\eqref{eq:cond4}, given a disc $B(p,2\eps)\in 2\mathcal C$ there are at most $\ga_2$ curves $C_k$ from $\cB$ intersecting $B(p,2\eps)$. Recall that $N(\beta/\epsilon)$ is the number of elements in $\cB$ and $n(\eps)$ is the number of elements in $\cC$, which is the same as the number of elements in $2\cC$. Hence,
comparing the sizes of $\cB$ and $2\cC$ via $g:\cB\rar2\cC$, we obtain
$$
N(\beta/\epsilon)\le\ga_2 n(\epsilon).
$$
Therefore,
$$
\dim_{ubox}\cS=\limsup_{\eps\to 0}\frac{\log n(\eps)}{\log(1/\eps)}\ge\limsup_{\eps\to 0}\frac{\log(N(\beta/\epsilon)/\ga_2)}{\log(1/\eps)}=
\limsup_{x\to\infty}\frac{\log N(x)}{\log x},
$$
and likewise
$$
\dim_{lbox}\cS\ge\liminf_{x\to\infty}\frac{\log N(x)}{\log x}.
$$
The desired equalities are thus proved.
\end{proof}

We finish this section with the following general theorem whose proof follows the lines of~\cite[p.~250]{b2} or~\cite[p.~126, Theorem~3]{w}.
\begin{thm}\label{th:111}
Let $\cP=\{C_k\}_{k\in\mathbb N\cup\{0\}}$ be a packing such that the associated residual set $\cS$ satisfies~\eqref{eq:cond1}. Then
$$
\begin{aligned}\label{eq:11}
\limsup_{x\to\infty}\frac{\log N(x)}{\log x}=E,
\end{aligned}
$$
where $E$ is the exponent of $\cP$.
\end{thm}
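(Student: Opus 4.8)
The plan is to discard the geometry entirely and treat this as a statement about a single sequence of positive reals. By~\eqref{eq:cond1} we have $M(2)<\infty$, so $a_k=\diam\,C_k\to0$; we may assume every $a_k\le1$, and since $M(0)=\infty$ there are infinitely many curves, while $N(x)=\#\{k : a_k^{-1}\le x\}$ is finite for each $x$. With $E\ge0$ recorded, the assertion becomes the purely real-analytic fact that the exponent of convergence of $\sum_k a_k^t$ equals the growth rate $L:=\limsup_{x\to\infty}\log N(x)/\log x$ of its counting function. I would establish $L\le E$ and $E\le L$ separately.

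For $L\le E$ I would argue directly. Fix any $t$ with $M(t)<\infty$ (equivalently $t>E\ge0$, so $t>0$). Every index $k$ counted by $N(x)$ satisfies $a_k\ge 1/x$, hence $a_k^t\ge x^{-t}$; summing these $N(x)$ terms gives $N(x)\,x^{-t}\le\sum_{a_k\ge 1/x}a_k^t\le M(t)$, i.e.
\[
N(x)\le M(t)\,x^t .
\]
Therefore $\log N(x)/\log x\le t+\log M(t)/\log x\to t$ as $x\to\infty$, so $L\le t$; taking the infimum over $t>E$ yields $L\le E$.

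For $E\le L$ I would run a dyadic comparison. Fix $t>t'>L$; by the definition of $\limsup$ there is $x_0>1$ with $N(x)<x^{t'}$ for all $x\ge x_0$. Splitting the indices according to $a_k^{-1}\in[2^n,2^{n+1})$, the number of $k$ in the $n$-th block is at most $N(2^{n+1})\le 2^{(n+1)t'}$ once $2^n\ge x_0$, while there $a_k^t\le 2^{-nt}$; hence that block contributes at most $2^{t'}2^{n(t'-t)}$ to $M(t)$. Since $t'-t<0$, these bounds sum to a convergent geometric series, and the finitely many indices with $a_k^{-1}<x_0$ add only a finite amount, so $M(t)<\infty$ and thus $E\le t$. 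Letting $t\downarrow L$ gives $E\le L$, completing the argument.

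I do not expect a genuine obstacle: this is precisely the classical relation behind Boyd's theorem, and the cited sources \cite{b2}, \cite{w} carry out exactly this kind of estimate. The only points needing care are bookkeeping—verifying that $N(x)<\infty$ so the sums may be split legitimately, separating off the finite head of ``large'' curves lying outside the range where $N(x)<x^{t'}$, and performing the two one-sided limits $t\downarrow E$ and $t\downarrow L$ correctly. One could replace the dyadic step by the Stieltjes identity $M(t)=t\int_1^\infty x^{-t-1}N(x)\,dx$, but that requires a Fubini/integration-by-parts justification, whereas the dyadic estimate is entirely self-contained.
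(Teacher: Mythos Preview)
Your proof is correct and the overall strategy---strip away the geometry, prove the two inequalities $L\le E$ and $E\le L$ for the sequence $a_k=\diam\,C_k$---is exactly the paper's. The direction $L\le E$ is handled identically in both. For $E\le L$ the executions diverge: the paper rearranges the $a_k$ to be monotone decreasing, notes that then $N(x)=k$ for $r_k^{-1}\le x<r_{k+1}^{-1}$, deduces $r_{k+1}^t<k^{-1}$ for $t>L$ and large $k$, and then, choosing $t<t'<E$, gets $r_{k+1}^{t'}<k^{-t'/t}$, a convergent $p$-series contradicting $t'<E$. Your dyadic-block estimate is a standard alternative that avoids both the monotone rearrangement and the auxiliary exponent inside the contradiction; it is arguably a little cleaner, while the paper's version makes the link $N(r_k^{-1})\approx k$ more explicit. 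Both are textbook arguments for the same elementary lemma, and neither buys anything the other does not.
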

\begin{proof}
Assume that $C_k$'s are numbered in such a way that the sequence $\{\diam\, C_k\}$ is monotone decreasing.  The limit of this sequence must be 0 because of~\eqref{eq:cond1}. Denote $r_k=\diam\, C_k$ and
$$
M=\limsup_{x\to\infty}\frac{\log N(x)}{\log x}.
$$
If $t>E$, then $\sum_kr_k^t$ is convergent and for any $x\in\bR$ we have
$$
x^{-t}N(x)\le\sum_k{r_k^t}<\infty.
$$
Taking $\log$ and the upper limit when $x\to\infty$, and then letting $t\to E_+$, we get $M\leq E$. Assume for the sake of contradiction that $M<E$ and let $t$ satisfy $M<t<E$. Note that there exists $K\in\bR$ such that for any $x\geq K$ we have
$$
t\geq \frac{\log N(x)}{\log x}.
$$
Also, there exists $L\in\bN$ such that for any $k\geq L$ and $x$ with $r_{k}^{-1}\leq x<r_{k+1} ^{-1}$ we have $x\geq K$. Then $N(x)=k$, hence
$$
t\geq \frac{\log N(x)}{\log x}>\frac{\log k}{\log r_{k+1}^{-1}},
$$
and therefore, $r_{k+1}^t< k^{-1}$. Let $t'$ satisfy $t<t'<E$. Then 
$$
r_{k+1}^{t'}=(r_{k+1}^{t})^{t'/t}< k^{-t'/t}
$$
for any $k\geq L$.
Since $t'/t>1$, $\sum_kr_k^{t'}$ is a convergent series.
This contradicts the definition of $E$.
\end{proof}

Thus, for homogeneous residual sets we obtain the following analogue of Theorem \ref{th:2}:
\begin{cor}\label{cor:hom}
If the residual set $\cS$ of a packing 
is homogeneous and 
\bbe\label{eq:ohoho}
\dim_{lbox}\cS=\dim_{ubox}\cS=\dim_H\cS,
\ee
then the limit of $\log N(x)/\log x$ as $x\to\infty$ exists and
$$
\lim_{x\to\infty}\frac{\log N(x)}{\log x}=E=\dim_H \mathcal \cS.
$$
\end{cor}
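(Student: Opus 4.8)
The plan is to combine the two results already established in this section, namely Proposition~\ref{p:dual} and Theorem~\ref{th:111}, with the hypothesis~\eqref{eq:ohoho}; no new geometric estimate is required. The first observation is that a homogeneous residual set satisfies~\eqref{eq:cond1} by definition, so Theorem~\ref{th:111} applies to $\cS$, and Proposition~\ref{p:dual} applies as well since $\cS$ is homogeneous. Thus both the upper-box description and the exponent description of the $\limsup$ of $\log N(x)/\log x$ are at our disposal simultaneously.

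First I would invoke Proposition~\ref{p:dual} to record the two identities
$$
\limsup_{x\to\infty}\frac{\log N(x)}{\log x}=\dim_{ubox}\cS,\qquad
\liminf_{x\to\infty}\frac{\log N(x)}{\log x}=\dim_{lbox}\cS.
$$
By the hypothesis $\dim_{lbox}\cS=\dim_{ubox}\cS$ in~\eqref{eq:ohoho}, the right-hand sides coincide, so the upper and lower limits of $\log N(x)/\log x$ agree. Hence the limit exists and equals the common value, namely the Minkowski dimension $\dim_{box}\cS$.

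Next I would apply Theorem~\ref{th:111}, which gives $\limsup_{x\to\infty}\log N(x)/\log x=E$. Chaining this with the first identity from Proposition~\ref{p:dual} yields $E=\dim_{ubox}\cS$, and the remaining equality $\dim_{ubox}\cS=\dim_H\cS$ from~\eqref{eq:ohoho} closes the loop. Putting everything together,
$$
\lim_{x\to\infty}\frac{\log N(x)}{\log x}=\dim_{ubox}\cS=E=\dim_H\cS,
$$
which is the asserted statement.

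Since the corollary is a formal consequence of two results proved above, there is no substantive obstacle; the only point requiring care is to verify that \emph{homogeneous} supplies exactly the hypotheses needed by each result, i.e.\ homogeneity for Proposition~\ref{p:dual} and condition~\eqref{eq:cond1} for Theorem~\ref{th:111}, both of which hold by the definition of a homogeneous residual set. I would also make explicit that the equality of the upper and lower limits is what upgrades the $\limsup$ statements into a genuine limit, so that the additional hypothesis~\eqref{eq:ohoho} is used twice: once to force $\liminf=\limsup$ and once to identify the common value with $\dim_H\cS$.
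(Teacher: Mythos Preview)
Your proof is correct and is exactly the approach implicit in the paper: the corollary is stated without proof because it follows immediately by combining Proposition~\ref{p:dual} with Theorem~\ref{th:111} and the hypothesis~\eqref{eq:ohoho}, precisely as you spell out.
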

\begin{rem}
There exist conditions on fractal sets that are easy to verify and that imply \eqref{eq:ohoho}  (see, e.g., conditions in Theorems 3 and 4 in \cite{F89}). In the next section we discuss another example of a homogeneous set satisfying \eqref{eq:ohoho}, namely 
a Sierpi\'nski carpet that is
the Julia set of a hyperbolic rational map. 
\end{rem}



\section{Julia sets of hyperbolic rational maps}\label{s:jls}
In this section we prove an analogue of Theorem \ref{th:2} for Sierpi\'nski carpets that are Julia sets of hyperbolic rational maps (Theorem \ref{th:Jul} below). Our proof is based
on the application of Corollary \ref{cor:hom} to Julia sets under consideration. For the background on the topics of complex dynamics used in this section see, e.g., \cite{Be}, \cite{CG}, and \cite{Mi2}.

\sbr

Let $f$  be a rational map of the Riemann sphere $\hat{\mathbb C}$. The \emph{Fatou set} $\cF(f)$ of $f$ is the set of all points $p$ in $\hat{\mathbb C}$ such that the family of iterates $\{f^k\}$ of $f$ is a normal family in some neighborhood of $p$. The \emph{Julia set} $\cJ(f)$ is the complement of the Fatou set in $\hat{\mathbb C}$. From the definitions one immediately sees that $\cF(f)$ is open and $\cJ(f)$ is compact. Moreover, $\cF(f)$ and $\cJ(f)$ are \emph{completely invariant} with respect to $f$, i.e., 
$$
f(\cF(f))=f^{-1}(\cF(f))=\cF(f),\quad f(\cJ(f))=f^{-1}(\cJ(f))=\cJ(f).
$$
If the Julia set $\cJ(f)$ of a rational map $f$ is a carpet (in which case we will refer to $\cJ(f)$ as {\it the carpet Julia set} $\cJ(f)$), then  $\cJ(f)$ is the residual set associated to the packing formed by the connected components of the Fatou set $\cF(f)=\cup_{k\in\mathbb N\cup\{0\}}D_k$. This follows from the definition of a residual set and the facts that $\cF(f)$ is open,  
$\cJ(f)\cap \cF(f)=\emptyset$, and $\cJ(f)\cup \cF(f)=\hat{\mathbb C}$.  The boundary curves $C_k=\partial D_k$ of these components are the peripheral circles of the carpet Julia set $\cJ(f)$.

\sbr

A rational map $f$ is said to be \emph{hyperbolic} if it is expanding in a neighborhood of its Julia set $\mathcal{J}(f)$ with respect to some conformal metric. More precisely, there exist a neighborhood $U$ of $\mathcal J(f)$, a smooth function $\lambda\colon U\to (0,\infty)$, and a constant $\rho>1$ such that
\begin{equation}\label{eq:hyp}
||f'(z)||_\lambda:=\frac{\lambda(f(z))\cdot ||f'(z)||}{\lambda(z)}\ge\rho,\quad\forall z\in U,
\end{equation}
where $||f'(z)||$ denotes the spherical derivative of $f$ at $z$. In what follows we will only consider Julia sets that are not equal to the whole sphere. Therefore, we may assume that they are compact subsets of the plane and the the spherical derivative in \eqref{eq:hyp}
 can be replaced by the modulus of the  derivative of $f$. 

\sbr

The function $\lambda$ can be used to define a new metric in a neighborhood of $\mathcal J(f)$ as follows. For a  rectifiable path $\gamma$ in $U$ let
$$
{\rm length}_\lambda(\gamma):=\int_\gamma\lambda\, ds.
$$
From~\eqref{eq:hyp} we  immediately get
$$
{\rm length}_\lambda(f\circ\gamma)\ge\rho\, {\rm length}_\lambda(\gamma)
$$
for any rectifiable path $\gamma$ in $U$ such that $f\circ\gamma$ is also in $U$. 
Therefore, a hyperbolic rational map $f$ is locally expanding by a factor at least $\rho$ with respect to the path metric defined  in a small neighborhood of $\mathcal J(f)$ by the formula
$$
d_\lambda(z,w):=\inf_\gamma{\rm length}_\lambda(\gamma),
$$
where $\gamma$ runs through all the rectifiable paths in $U$ connecting $z$ and $w$. Below we drop the index $\lambda$ and write $d(z,w)$  instead of  $d_\lambda(z,w)$. Also, in what follows $\diam$ refers to the diameter with respect to $d_\lambda$. Since $\lambda$ is  bounded away from $0$ and $\infty$ in a neighborhood of $\mathcal J(f)$, the metric $d_\lambda$ is locally comparable (i.e., bi-Lipschitz equivalent) to the Euclidean metric in such a neighborhood. Therefore, it is irrelevant which metric, the Euclidean metric or the metric $d_\lambda$, one uses for verification of~\eqref{eq:cond1}, \eqref{eq:cond2}, and~\eqref{eq:cond2.5}.

\begin{thm}\label{th:Jul}
Assume that $f$ is a hyperbolic rational map whose Julia set $\mathcal J(f)$ is a Sierpi\'nski carpet. Then 
$$
\lim_{x\to\infty}\frac{\log N(x)}{\log x}=E=\dim_H \mathcal J(f),
$$
where $N$ is the curvature distribution function and $E$ is the exponent of the packing by the  connected components of the Fatou set of $f$, and $\dim_H \mathcal J(f)$ is the Hausdorff dimension of $\mathcal J(f)$.
\end{thm}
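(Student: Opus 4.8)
The plan is to deduce the theorem from Corollary~\ref{cor:hom}. Since the text already identifies $\cJ(f)$ with the residual set of the packing by the Fatou components $\{D_k\}$, with peripheral circles $C_k=\partial D_k$, it suffices to prove two things: that $\cJ(f)$ is homogeneous, and that its Minkowski and Hausdorff dimensions coincide as in \eqref{eq:ohoho}. Both follow from the one structural consequence of hyperbolicity that I would isolate at the outset, namely \emph{expansion with bounded distortion}: by \eqref{eq:hyp} the map $f$ expands $d_\lambda$ near $\cJ(f)$ by a factor $\ge\rho>1$, while, because $f$ is holomorphic and its critical points lie in the Fatou set, every inverse branch of $f^n$ taken along $\cJ(f)$ over a ball of some fixed \emph{Markov} radius $r_0$ is univalent, hence has distortion bounded by a constant independent of $n$ (by the Koebe distortion theorem together with the standard telescoping estimate: the intermediate orbit pieces shrink geometrically, so the accumulated distortion converges).

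To verify homogeneity I would transport bounded geometry from the finitely many periodic Fatou components --- the ``seeds'' --- to every scale. Each peripheral circle $C_k$ is the univalent, bounded-distortion inverse-branch image of the boundary of one of these finitely many seed components, and bounded distortion preserves roundness up to a uniform factor; this gives \eqref{eq:cond1}. For \eqref{eq:cond2}, given $p\in\cJ(f)$ and $0<r\le\diam\cJ(f)$ I would pick $n$ with $\rho^n r\asymp 1$ and apply the inverse branch of $f^n$ near $p$ to a peripheral circle of definite diameter lying near $f^n(p)$ --- such a circle exists at the top scale by compactness and the carpet structure --- obtaining a peripheral circle of diameter $\asymp r$ that meets $B(p,r)$. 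The same pull-back principle reduces the relative separation \eqref{eq:cond2.5} (respectively the co-fatness \eqref{eq:cond3.5}) of two peripheral circles to the corresponding statement at the top scale, which holds because the finitely many seed components are disjoint Jordan domains at positive mutual distance (respectively are co-fat). Thus $\cJ(f)$ is homogeneous.

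For the dimension coincidence \eqref{eq:ohoho} I would invoke the classical fact, proved by exactly the same expansion/bounded-distortion formalism (Sullivan's conformal measure, or a direct Markov-partition covering argument), that the Julia set of a hyperbolic rational map is \emph{Ahlfors regular} of dimension $\delta=\dim_H\cJ(f)$: there are a measure $\mu$ on $\cJ(f)$ and a constant $C\ge1$ with $C^{-1}r^\delta\le\mu(B(x,r))\le Cr^\delta$ for all $x\in\cJ(f)$ and $0<r\le\diam\cJ(f)$. Ahlfors $\delta$-regularity pins the covering number $n(\eps)$ between constant multiples of $\eps^{-\delta}$, so $\dim_{lbox}\cJ(f)=\dim_{ubox}\cJ(f)=\dim_H\cJ(f)=\delta$, which is \eqref{eq:ohoho}. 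Feeding homogeneity and \eqref{eq:ohoho} into Corollary~\ref{cor:hom} gives $\lim_{x\to\infty}\log N(x)/\log x=E=\dim_H\cJ(f)$.

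I expect the main obstacle to be the bounded-distortion bookkeeping behind homogeneity --- in particular proving \eqref{eq:cond2.5} (or \eqref{eq:cond3.5}) uniformly over inverse branches of \emph{all} depths, and making the ``top-scale'' compactness statements about peripheral circles precise enough to seed the pull-back arguments. By comparison, the dimension step is essentially a citation of standard hyperbolic-dynamics theory, and the final assembly via Corollary~\ref{cor:hom} is immediate.
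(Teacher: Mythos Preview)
Your proposal is correct and follows essentially the same strategy as the paper: verify homogeneity of $\cJ(f)$ via the expansion/bounded-distortion structure of hyperbolic maps, then invoke the equality of Hausdorff and Minkowski dimensions, and finish with Corollary~\ref{cor:hom}. The only differences are in the packaging of the tools: the paper encodes bounded distortion through the bi-Lipschitz estimate~\eqref{eq:biLip} (quoted from \cite[(3.6)]{F89}) and argues with \emph{forward} iterates $f^n$ rather than inverse branches and Koebe, and it cites \cite[Theorem in \S3]{F89} directly for the dimension coincidence rather than going through Sullivan's conformal measure and Ahlfors regularity; also, the paper proves~\eqref{eq:cond2.5} by a contradiction argument at small scales rather than by pulling back a top-scale separation. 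One phrasing to tighten: your claim that every $C_k$ is a bounded-distortion inverse-branch image of a \emph{periodic} (``seed'') boundary is delicate, since a periodic Fatou-component boundary may be too large to sit inside a single Markov ball over which a univalent branch exists; the paper (and your argument for~\eqref{eq:cond2}) sidesteps this by comparing small $C_k$ to the finitely many peripheral circles of diameter at a fixed intermediate scale rather than to the periodic ones.
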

\begin{proof}
We first show that if $f$ is a hyperbolic rational map whose Julia set $\mathcal J(f)$ is a Sierpi\'nski carpet, then  $\mathcal J(f)$ is homogeneous. 

\sbr

According to~\cite[(3.6)]{F89}, for a small enough $r>0$ and a ball $B(p,r)$ centered at $p\in\mathcal J(f)$ there exists a natural number $n$ so that  the iterate $f^n|_{B(p,r)}$ is bi-Lipschitz  when the metric on $B(p,r)$ is rescaled by $1/r$. More precisely, there exist positive constants $a, b, r_0$ such that for every $p\in\mathcal J(f)$ and $0<r\le r_0$ there exists $n\in\mathbb N$ with 
\begin{equation}\label{eq:biLip}
a\frac{d(z,w)}{r}\le d(f^n(z),f^n(w))\le b\frac{d(z,w)}{r},\quad \forall z, w\in B(p,r).
\end{equation}
Note that condition \eqref{eq:biLip} means that the Julia set $\cJ(f)$ of a hyperbolic rational map $f$ is {\em approximately self-similar} according to Definition 3.11 in \cite{bk}.

\sbr

Now we are ready to prove that $\mathcal J(f)$ is homogeneous, namely it satisfies~\eqref{eq:cond1}, \eqref{eq:cond2}, and~\eqref{eq:cond2.5}. Let $\{C_k\}$ denote the sequence of peripheral circles of $\mathcal J(f)$.
First of all, according to Whyburn's characterization~\cite{W58} we have ${\rm diam}\, C_k\to 0$ as $k\to\infty$. Also, since every peripheral circle $C$  of $\mathcal J(f)$ is the boundary of a  connected component of $\cF(f)$, the curve $f(C)$ is also a peripheral circle of $\cJ(f)$. 

\sbr

To show~\eqref{eq:cond1} assume that $C$ is an arbitrary peripheral circle of $\mathcal J(f)$ whose diameter is at most $\frac{a}{b}\min(r_0, a)$. 
We set $r=\frac{b}{a}\diam\, C$ and choose an arbitrary $p\in C$.  
Note that $C\subset B(p,r)$, since we may assume that $b/a>1$.
Because $r\le r_0$ and $p\in\cJ(f)$,  
by~\eqref{eq:biLip} there exists $n\in\mathbb N$ such that
\bbe\label{eq:vspom1}
\frac{a^2}{b}\leq \diam\,f^n(C)\leq a.
\ee
Since there are finitely many peripheral circles whose diameter is at least  $\frac{a}{b}\min(r_0, a)$, there exists a constant $\alpha\ge 1$ such that for each of these ``large" curves $C_k$ there  exist inscribed and circumscribed concentric circles centered at $p_k$ of radii $r_k$ and $R_k$, respectively, with
$
{R_k}/{r_k}\le\alpha.
$
Due to \eqref{eq:vspom1} the peripheral circle $f^n(C)$ is one of the large peripheral circles, say $C_k$. 
Let $q\in B(p,r)$ satisfy $f^n(q)=p_k$.
Now~\eqref{eq:biLip} applied to $B(p,r)$ with $z=q$ and $w\in C$ implies that a disc centered at $q$ of radius at least $rr_k/b$ can be inscribed in $C$ and
a disc centered at $q$ of radius at most $rR_k/a$ can be circumscribed around $C$.
Therefore, the quotient of the circumscribed radius to the inscribed one for $C$ is at most $b\alpha/a$ and~\eqref{eq:cond1} follows. 

\sbr

We argue similarly to prove~\eqref{eq:cond2}. Suppose that $p$ is an arbitrary point in $\mathcal J(f)$ and $0<r\le\diam\, \mathcal J(f)$. If $r\ge\min(r_0,a/2)$, then the fact that $\mathcal J(f)$ is a compact set with no interior points implies the existence of $\tilde\beta$ such that any disc $B(p,r)$ intersects a peripheral circle $C_k$ of $\mathcal J(f)$ with
$$
\frac{r}{\tilde\beta}\le\diam\, C_k\le\tilde\beta r.
$$
Moreover, by choosing $\tilde\beta$ sufficiently large, we can require that the diameter of each such $C_k$ is at most $a/2$.

\sbr

We now assume that $r<\min(r_0,a/2)$. By the first inequality in~\eqref{eq:biLip}, there exists $n\in\mathbb N$ such that $f^n(B(p,r))$ contains an open disc centered at $f^n(p)$ of radius at least $a$. It follows from the above that $B(f^n(p),a/2)$ intersects a peripheral circle $C_k$ of $\mathcal J(f)$ with 
\begin{equation}\label{eq:perc}
\frac{a}{2\tilde\beta}\le\diam\, C_k\le \frac{a}2.
\end{equation}
In particular, $C_k$ is contained in $B(f^n(p), a)$ and, consequently, in $f^n(B(p,r))$. Therefore, there exists a peripheral circle $C$ of $\mathcal J(f)$ that intersects $B(p,r)$ and such that $f^n(C)=C_k$. Moreover, \eqref{eq:biLip} combined with~\eqref{eq:perc} yield
$$
\frac{a}{2b\tilde\beta}r\le\diam\, C\le\frac12 r.
$$
The proof of~\eqref{eq:cond2} (with $\be=\max(1/2,\tilde\be,2b\tilde\be/a)$) is thus complete.

\sbr

Finally, to prove~\eqref{eq:cond2.5} we argue by contradiction. Assume that there exists a sequence of pairs $\{C_{j_i}, C_{k_i}\}_i$ of distinct peripheral circles of $\mathcal J(f)$ such that $\Delta(C_{j_i}, C_{k_i})\to0$ as $i\to\infty$. By symmetry we may assume that $\diam\, C_{j_i}\le\diam\, C_{k_i}$ for all $i$. Since there are only finitely many peripheral circles of diameter at least a given number, we must have $\diam\, C_{j_i}\to0$ 
as $i\to\infty$. Also, 
$$
{\rm dist}(C_{j_i}, C_{k_i})=\Delta(C_{j_i}, C_{k_i})\diam\, C_{j_i}\to0,\quad i\to\infty.
$$
Let $p_i$ be a point on $C_{j_i}\subset\mathcal J(f)$  and let $q_i$ be a point on $C_{k_i}$ such that 
$${\rm dist}(C_{j_i}, C_{k_i})=d(p_i, q_i).$$
Let $0<\epsilon<1$ be arbitrary and let $i$ be so large that 
$$
r:=2\diam\, C_{j_i}\le {r_0}\quad\text{ and }\quad \Delta(C_{j_i}, C_{k_i})<\epsilon.
$$
Then $d(p_i,q_i)<r/2$, hence $q_i\in B(p_i, r)$ and there is at least one more point $q'\ne q_i$ in the intersection $C_{k_i}\cap B(p_i, r)$.
By applying 
\eqref{eq:biLip} to $B(p_i, r)$ with $z,w\in C_{j_i}\subset B(p_i, r)$, we conclude that there exists $n\in\mathbb N$ such that for the peripheral circle $C_j=f^n(C_{j_i})$ we have $\diam\, C_j\geq a/2$. Similarly, by applying \eqref{eq:biLip} to $B(p_i, r)$ with $z=q'$ and $w=q_i$, we conclude that for the peripheral circle $C_k=f^n(C_{k_i})$ we have $\diam\, C_k\ge a/2$ (recall that $\diam\, C_{k_i}\ge \diam\, C_{j_i}$). Finally, \eqref{eq:biLip} applied to $B(p_i, r)$ with $z=p_i$ and $w=q_i$ gives
$$
\Delta(C_j, C_k)\le 2b\frac{{\rm dist}(C_{j_i}, C_{k_i})}{ra}=2b\frac{\Delta(C_{j_i}, C_{k_i})\diam\, C_{j_i}}{ra}=\frac{b}{a}\Delta(C_{j_i}, C_{k_i})<\frac{b}{a}\epsilon.
$$
This is  a contradiction because $\epsilon$ is arbitrary and there are only finitely many pairwise disjoint peripheral circles $C$ with $\diam\, C\ge a/2$. Hence, \eqref{eq:cond2.5} follows.

\sbr

The rest of the proof is a simple application of the results of~\cite{F89} and the previous results of this paper.
Indeed, by~\cite[Theorem in \S3]{F89}, if $\mathcal{J}(f)$ is the Julia set of a hyperbolic rational map $f$, then 
$$
\dim_{lbox} \mathcal J(f)=\dim_{ubox} \mathcal J(f)=\dim_H \mathcal J(f).
$$
Corollary \ref{cor:hom} therefore implies that the limit of $\log N(x)/\log x$ as $x\to\infty$ exists and
$$
\lim_{x\to\infty}\frac{\log N(x)}{\log x}=E=\dim_H \mathcal J(f).
$$
This completes the proof of Theorem~\ref{th:Jul}.
\end{proof}

\begin{rem}
The peripheral circles of $\mathcal J(f)$ as above are in fact uniform quasicircles, and hence $\mathcal J(f)$ also satisfies~\eqref{eq:cond3.5} (see~\cite[Corollary~2.3]{S95}).
\end{rem}

\section{Concluding remarks and further questions}\label{CR}

In their recent work \cite{k} A.~Kontorovich and H.~Oh proved a result about Apollonian circle packings that strengthens Theorem~\ref{th:2}. Namely, that $\lim_{x\to\infty}({N(x)}/{x^\al})=c$ for some positive constant $c$, where $\al=E=\dim_H\cS$ (in the notation of Theorem~\ref{th:2}). We believe that an analogue of this result also should hold in our context, i.e., for packings whose associated residual sets are homogeneous and approximately self-similar. 
More specifically, 
we make the following conjecture.  
\begin{conjecture}
If $f$ is a hyperbolic rational map whose Julia set is a Sierpi\'nski  carpet, then
there exists a positive constant $c$ such that
\bbe\label{eq:kon}
N(x)\sim c\cdot x^{E},
\ee
where $N$ is the curvature distribution function and $E$  is the exponent of the packing by the connected components of the Fatou set of $f$. In other words, $\lim_{x\to\infty}({N(x)}/{x^E})=c.$
\end{conjecture}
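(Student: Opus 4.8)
The plan is to upgrade the coarse identity of Theorem~\ref{th:Jul} to the sharp count by analyzing the Dirichlet-type series
$$
M(s)=\sum_{k}(\diam\, C_k)^s,
$$
which is precisely the Mellin--Stieltjes transform of the counting measure $dN$: since $N(x)=\#\{k:(\diam\, C_k)^{-1}\le x\}$, one has $\int_{1^-}^{\infty}x^{-s}\,dN(x)=M(s)$. By the very definition of the exponent, $M(s)$ converges for $\Re s>E$ and diverges as $s\to E_{+}$. I would show that $M$ admits a meromorphic continuation to a half-plane $\Re s>E-\eta$ whose only singularity on and to the right of the line $\Re s=E$ is a simple pole at $s=E$ with a \emph{positive} residue $\rho$. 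Granting this, the Wiener--Ikehara Tauberian theorem applies to the nondecreasing function $N$ and yields $N(x)\sim(\rho/E)\,x^{E}$, which is \eqref{eq:kon} with $c=\rho/E$.

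The continuation of $M$ comes from the thermodynamic formalism of the hyperbolic map $f$. Since $f$ has no critical points on $\cJ(f)$, the restriction $f|_{\cJ(f)}$ is a conformal expanding repeller and admits a finite Markov coding; the expanding metric $d_\lambda$ of \eqref{eq:hyp} makes $|f'|\ge\rho>1$. The peripheral circles form a forward-invariant family that splits into finitely many grand orbits, the periodic Fatou components serving as roots, so that every $C_k$ has an iterate $f^{n}(C_k)$ lying in a fixed finite set of root circles $\{C^{(0)}_1,\dots,C^{(0)}_m\}$ of definite size. Using bounded distortion for inverse branches of $f^{n}$ near $\cJ(f)$ together with the approximate self-similarity \eqref{eq:biLip}, one obtains the comparison
$$
\diam\, C_k\asymp\diam\, C^{(0)}_i\cdot\exp\Big(-\sum_{j=0}^{n-1}\log|f'|\big(f^{j}(z_k)\big)\Big)
$$
for a point $z_k\in C_k$, uniformly in $k$. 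Grouping the circles by their root and by the return length $n$, and discarding the finitely many branched ancestors of the critical orbits (which contribute a factor holomorphic near $s=E$), one identifies $M(s)$, up to a bounded factor, with
$$
\sum_{i=1}^{m}(\diam\, C^{(0)}_i)^{s}\sum_{n\ge0}\big(\mathcal{L}_s^{\,n}\mathbf{1}\big)(x_i),\qquad (\mathcal{L}_s g)(x)=\sum_{f(y)=x}|f'(y)|^{-s}\,g(y),
$$
where $x_i$ is a base point attached to $C^{(0)}_i$ and $\mathcal{L}_s$ is the Ruelle transfer operator for the geometric potential $-s\log|f'|$. Thus $M(s)$ is governed by the resolvent $(I-\mathcal{L}_s)^{-1}$.

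By the Ruelle--Perron--Frobenius theorem, on a suitable space of H\"older functions $\mathcal{L}_s$ has a simple leading eigenvalue $e^{P(s)}$ with a spectral gap, where $P(s)$ is the topological pressure of $-s\log|f'|$; $P$ is real-analytic and strictly decreasing, and Bowen's formula gives $P(E)=0$ with $E=\dim_H\cJ(f)$, matching Theorem~\ref{th:Jul}. Hence $\sum_n\mathcal{L}_s^{\,n}=(I-\mathcal{L}_s)^{-1}$ converges for $\Re s>E$ and continues meromorphically with a simple pole exactly where the leading eigenvalue equals $1$, i.e.\ at $s=E$. The residue is read off from the normalized eigenfunction and the equilibrium state $\mu_E$ of $-E\log|f'|$, the relevant factor being $-P'(E)=\int\log|f'|\,d\mu_E>0$; since all the geometric weights are positive, this residue $\rho$ is positive. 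Equivalently, one may bypass the Tauberian step and invoke Lalley's renewal theorem for the return times $-\log\diam\, C_k$, whose conclusion is again $N(x)\sim c\,x^{E}$.

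The main obstacle is the aperiodicity input needed to exclude further poles of $M$ on the critical line $\Re s=E$: one must rule out an eigenvalue $1$ of $\mathcal{L}_{E+it}$ for every $t\ne0$. This is exactly the requirement that the geometric potential $\log|f'|$ be \emph{non-lattice}, i.e.\ not cohomologous over $f|_{\cJ(f)}$ to a function valued in a discrete subgroup $c\bZ$ of $\bR$; equivalently, that the logarithms of the multipliers of the periodic orbits of $f$ generate a dense subgroup of $\bR$. In the lattice case the asymptotic acquires a multiplicatively periodic factor and \eqref{eq:kon} fails, as it does for the self-similar model carpets $S_p$ of Example~\ref{ex:st}. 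The point is that such rigid arithmetic behavior is confined to the exceptional maps (conjugates of power maps, Chebyshev polynomials, and Latt\`es examples), whose Julia sets are arcs, circles, or the whole sphere and hence never carpets; for a hyperbolic $f$ with carpet Julia set I expect the multiplier spectrum to be non-arithmetic, so that the non-lattice condition holds and the clean asymptotic follows. Making this exclusion rigorous---via the rigidity theory of the Ruelle zeta function and the classification of maps with arithmetic length spectrum---is the crux of the argument.
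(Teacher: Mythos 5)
You should first note a mismatch of genre: the paper does not prove this statement at all --- it is posed as an open conjecture in Section~\ref{CR} --- so there is no proof of record to compare against, and your text must be judged as a research program. As a program, your architecture is the standard and, I believe, correct one: encode $-\log\diam\,C_k$ as a Birkhoff sum over the expanding dynamics, control the Dirichlet series $M(s)$ by the resolvent $(I-\mathcal{L}_s)^{-1}$ of the Ruelle operator for $-s\log|f'|$, use Bowen's formula $P(E)=0$ (consistent with Theorem~\ref{th:Jul} and Theorem~\ref{th:111}) to place the leading singularity at $s=E$, and finish by a Tauberian or renewal argument. This is essentially the route by which such sharp counting results were obtained for circle packings invariant under Kleinian groups, and by which the present conjecture was eventually settled in later work of Oh and Winter for hyperbolic rational maps whose Julia set is not contained in a circle. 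Nevertheless, two steps in your outline are genuine gaps rather than routine verifications.

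First, the non-lattice exclusion, which you yourself flag as the crux, is not closed by the argument you sketch. The classification you invoke --- that an arithmetic multiplier spectrum occurs only for power maps, Chebyshev polynomials, and Latt\`es examples --- is not an available theorem you can cite; the rigidity results actually in the literature (Zdunik-type dichotomies) concern the coincidence of the measure of maximal entropy with conformal measure, or Julia sets contained in real-analytic curves, and converting ``$\cJ(f)$ is a carpet'' into ``$\log|f'|$ is not cohomologous to a function with values in $c\bZ$'' requires a real proof, not an expectation. Without it you cannot rule out an eigenvalue $1$ of $\mathcal{L}_{E+it}$ for $t\neq0$, and the conclusion can a priori degrade to a multiplicatively periodic asymptotic $N(x)\sim x^{E}\Phi(\log x)$, exactly as for the self-similar carpets $S_p$. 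Since everything else in your outline is standard thermodynamic machinery, your proposal is best described as an honest reduction of the conjecture to this unproven rigidity statement, not a proof of it.

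Second, your identification of $M(s)$ with the transfer-operator series only ``up to a bounded factor'' is insufficient for the analytic step. Two-sided multiplicative comparison of Dirichlet series preserves the abscissa of convergence (which is already known, by Theorem~\ref{th:111}) but preserves neither meromorphic continuation nor the location, simplicity, or residue of poles; Wiener--Ikehara needs the boundary behavior of $M$ itself on $\Re s=E$, not of a function comparable to it. To repair this you must express $-\log\diam\,C_k$ \emph{exactly} as an ergodic sum of a H\"older potential plus a correction that is H\"older along the coding (not merely bounded), i.e.\ set up a genuine renewal equation for $N$ in Lalley's sense, with the distortion error absorbed into the symbolic potential. So your closing remark that one ``may bypass the Tauberian step and invoke Lalley's renewal theorem'' has it backwards: the renewal formulation is not an optional shortcut but the necessary vehicle, and it, too, consumes the non-lattice input of the previous paragraph. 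A minor further point: since critical points of $f$ may lie in the Fatou set, Fatou components can be hit by branches of degree greater than one, so the grand-orbit bookkeeping behind your comparison $\diam\,C_k\asymp\diam\,C^{(0)}_i\cdot\exp(-\sum_{j<n}\log|f'|(f^j(z_k)))$ needs the (standard, but unstated) bounded-degree and bounded-distortion control for inverse branches near $\cJ(f)$.
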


\sbr


A homeomorphism $f\colon (X,d_X)\to(Y,d_Y)$ between two metric spaces is called \emph{quasisymmetric} if there exists a homeomorphism $\eta\colon [0,\infty)\to[0,\infty)$ such that
$$
\frac{d_Y(f(x),f(x'))}{d_Y(f(x),f(x''))}\le\eta\left(\frac{d_X(x,x')}{d_X(x, x'')}\right)
$$
for any three distinct points $x,x'$, and $x''$ in $X$. It turns out that quasisymmetric maps between subsets of the plane or the sphere preserve properties~\eqref{eq:cond1}, \eqref{eq:cond2}, \eqref{eq:cond2.5}, and~\eqref{eq:cond3.5} up to a change of constants;
see~\cite{H01} for background on quasisymmetric maps. An important invariant for quasisymmetric maps is the \emph{conformal dimension}. For a metric space $\cS$ this is the infimum of the Hausdorff dimensions of all images of $\cS$ under quasisymmetric maps. The conformal dimension of a metric space is always at least its topological dimension and, trivially, at most the Hausdorff dimension.
For example, the conformal dimension of the standard Cantor set $\mathcal C$ is zero, which is its topological dimension, and is strictly less than its Hausdorff dimension $\log2/\log3$. The infimum in the definition of the conformal dimension of $\cC$ is not achieved though, i.e., there is no metric space of Hausdorff dimension 0 that is quasisymmetric to $\cC$.

\sbr

The value of the conformal dimension of the Sierpi\'nski carpet $S_3$ is unknown (see~\cite[15.22 Open problem]{H01}). However, it has a non-trivial lower bound $1+\log2/\log3$, which is strictly greater than its topological dimension 1. If in the definition of conformal dimension we fix the target to be the plane, it is interesting to see what will be the shapes of the complementary components of quasisymmetric images of $S_3$ that are near-optimal with respect to the conformal dimension. Proposition~\ref{p:dual} implies that the reciprocal diameters of peripheral circles of near-optimal images of $S_3$ must go to infinity faster than those for $S_3$. At present we do not know in what way the quasisymmetric map should distort the boundaries of the complementary components to make the Hausdorff dimension smaller.

\end{document}